\newcounter{todocounter}
\theoremstyle{plain}
\newtheorem{theorem}{Theorem}[section]
\newtheorem{definition}[theorem]{Definition}
\newtheorem{question}{Question}
\newtheorem{lemma}[theorem]{Lemma}
\newtheorem{corollary}[theorem]{Corollary}
\newtheorem{proposition}[theorem]{Proposition}
\theoremstyle{remark}
\newtheorem{remark}{\bf{Remark}}
\numberwithin{equation}{section}
\newcommand{\R}{\mathbb{R}}
\newcommand{\T}{\mathbb{T}}
\newcommand{\x}{\mathbf{x}}
\newcommand{\F}{\mathbf{F}}
\newcommand{\K}{\mathbf{K}}
\newcommand{\C}{\mathrm{C}}
\def\namedlabel#1#2{\begingroup
    #2%
    \def\@currentlabel{#2}%
    \phantomsection\label{#1}\endgroup
}
\let\svlabel\label
\let\svref\eqref
\def\unusedlabels{}
\renewcommand\label[1]{\svlabel{#1}\global\edef\unusedlabels{\unusedlabels$<$#1$>$ }}
\renewcommand\ref[1]{\svref{#1}%
  \edef\teststring{$<$#1$>$}%
  \edef\tmp{\unusedlabels}%
  \def\unusedlabels{}%
  \expandafter\refhelper\tmp\relax%
}
\def\refhelper#1 #2\relax{%
  \edef\expandcase{#1}%
  \ifdefstrequal{\teststring}{\expandcase}{}{\edef\unusedlabels{\unusedlabels#1 }}%
  \if\relax#2\relax\else\refhelper#2\relax\fi
}
\title{\textsc{Polynomial Convergence Rate for Quasi-Periodic Homogenization of Hamilton-Jacobi Equations and Application to Ergodic Estimates}}
\author{Bingyang Hu, Son N.T. Tu$^\dagger$ and Jianlu Zhang}
\address{B. Hu \\ Department of Mathematics and Statistics, Auburn University}
\email{bzh0108@auburn.edu}
\address{Son. N. T. Tu$^\dagger$,\;Department of Mathematics, Michigan State University\\
East Lansing, Michigan 48824, USA}
    \email{tuson@msu.edu}
\address{Jianlu Zhang\\ Hua Loo-Keng Key Laboratory of Mathematics \& Mathematics Institute
            \\Academy of Mathematics and systems science
            \\Chinese Academy of Sciences, Beijing 100190, China}
        \email{jzhang87@amss.ac.cn}
\keywords{Quasi-periodic function; homogenization; rate of convergence;  Hamilton-Jacobi equations; viscosity solutions}
\subjclass[2010]{
35B27 
35B40 
35F21 
49L25 
}
\thanks{$\dagger$Corresponding author. \\
{\it Statements and Data Availability:} The authors declare no conflict of interest and no data associated with the paper. \\ The work of B. Hu is supported by SIMONS Travel Support MPS-TSM-00007213. 
S. Tu's work, initiated at UW-Madison, was partially supported by NSF grants DMS-1664424 and DMS-1843320 and, during his time at Michigan State University, by NSF grant DMS-2204722. The work of J. Zhang is supported by the National Key
R\&D Program of China (No. 2022YFA1007500) and the National Natural Science Foundation of China (No. 12231010).}
\date{\today}
\begin{document}

\begin{abstract} 
    In this paper, we demonstrate a polynomial convergence rate for homogenization of Hamilton–Jacobi equations with quasi-periodic potentials. We establish a connection between the convergence rate of homogenization and the regularity of the effective Hamiltonian, by 
    using a new quantitative ergodic estimate for \emph{bounded} quasi-periodic functions with Diophantine frequencies. As an application, we also study the convergent rate for Birkhoff average of  \emph{unbounded} quasi-periodic functions.
\end{abstract}

\maketitle



\section{Introduction}

\subsection{Motivation}  
For each $\varepsilon>0$, let $u^\varepsilon\in C\big(\mathbb{R}\times [0,\infty)\big)$ be the viscosity solution to:
\begin{equation}\label{eq:intro:Ceps}
\begin{cases}
\begin{aligned}
    u_t^\varepsilon + H\left(\frac{x}{\varepsilon},Du^\varepsilon\right) &= 0 & &\quad\text{in}\;\mathbb{R} \times (0,\infty),\\
    u^\varepsilon(x,0) &= u_0(x) & &\quad\text{on}\;\mathbb{R}.
\end{aligned}
\end{cases} \tag{$C_\varepsilon$}
\end{equation}
The initial data $u_0\in \mathrm{BUC}(\mathbb{R}) \cap \mathrm{Lip}(\mathbb{R})$, where $\mathrm{BUC}(\mathbb{R})$ denotes the set of bounded, uniformly continuous functions on $\mathbb{R}$, and the Hamiltonian $H=H(x,p):\R \times \R \to \R$ is a given continuous function satisfying:
\begin{itemize}
    \item[(i)] almost-periodic in $x$: for each $R>0$, the family of translation $\{ H(\cdot + z, \cdot ) : z \in  \R\}$ is relatively compact in $\mathrm{BUC}(\mathbb{R}^n\times B(0,R))$; 
    \item[(ii)] coercive in $p$ uniformly in $x$, that is $H(x,p)\to \infty$ as $|p|\to \infty$ uniformly in $x$.
\end{itemize}
As we can see, (i) naturally extends the periodic setting  (i.e., $H(x+\ell, p) = H(x, p)$ for $\ell\in \R$). In the almost-periodic setting (i)-(ii), it has been proven in \cite{Ishii_almost_periodic_1999, tran_hamilton-jacobi_2021} that $u^\varepsilon$ converges locally uniformly on $\mathbb{R} \times [0, \infty)$ to a function $u$ as $\varepsilon \to 0^+$, which solves the effective equation:
\begin{equation}\label{eq:intro:C}
\begin{cases}
\begin{aligned}
    u_t+ \overline{H}(Du) &= 0 & &\quad\text{in}\;\mathbb{R}\times(0,\infty),\\
    u(x,0) &= u_0(x) & &\quad\text{on}\;\mathbb{R}.
\end{aligned}
\end{cases} \tag{C}
\end{equation}
Here, the \emph{effective Hamiltonian} $\overline{H}(p)$ is determined as the unique constant such that the following \emph{approximated cell problem} \eqref{eq:CPdelta} can be solved with a viscosity solution $w_\delta\in \mathrm{BUC}(\mathbb{R})\cap \mathrm{Lip}(\mathbb{R})$ for any $\delta>0$:
\begin{equation}\label{eq:CPdelta} 
    \overline{H}(p) - \delta \leq H\big(x,p+Dw_\delta(x)\big) \leq \overline{H}(p)+\delta \qquad\text{in}\;\mathbb{R}. \tag{CP$_\delta$}
\end{equation}
Such a $w_\delta$ is also called an \emph{approximated corrector}.  For \(\delta = 0\), an exact corrector to \eqref{eq:CPdelta} may not exist \cite{lions_correctors_2003}.\color{black}

In this article, we are interested in the convergence rate of $u^\varepsilon-u$ as $\varepsilon\rightarrow 0^+$ for a class of mechanical Hamilton--Jacobi equations with almost-periodic potentials. 
A rate, expressed through an abstract modulus, was obtained in \cite{armstrong_error_2014, caffarelli_rates_2010} (see the literature section below). To the best of our knowledge, there has not been any result on an explicit polynomial rate of convergence for the almost periodic setting. Given the lack of compactness in this scenario, methods in the periodic setting cannot be applied directly.  
We adopt an approach similar to \cite{mitake_tran_yu_2019_rate_of_convergence_ARMA, tu_2018_rate_asymptotic}. Our key innovations include obtaining new quantitative ergodic estimates, integrating them into the framework, and establishing a connection between the regularity of \(\overline{H}\) and the rate of convergence.

\subsection{Literature and Contributions}

In the periodic setting, the homogenization for Hamilton-Jacobi equations was first initiated in \cite{LPV}. The perturbed test function method was introduced in \cite{evans_perturbed_1989, evans_homogenization_1992}, then combined with the discount approximation to achieve the first convergence rate of $\mathcal{O}(\varepsilon^{1/3})$ in \cite{capuzzo-dolcetta_rate_2001}.

In the convex, periodic setting, the optimal rate of convergence $\mathcal{O}(\varepsilon)$ under several conditions by using optimal control theory and weak KAM method was first established in \cite{mitake_tran_yu_2019_rate_of_convergence_ARMA}. A rate of $\mathcal{O}(\varepsilon\log(\varepsilon))$ was achieved in \cite{Cooperman_2021_ARMA} using a method rooted in percolation theory. Recently, the optimal rate $\mathcal{O}(\varepsilon)$ was attained in \cite{tran_optimal_2021} using a curve cutting method introduced in \cite{Burago1992}. Further developments of the method in \cite{tran_optimal_2021} can be found in  \cite{hanjang2023Nonlinearity, nguyen-tien_optimal_2022, han_quantitative_2024}. Earlier progress in this direction with nearly optimal rates can be found in \cite{jing_tran_yu_2019_minimax,tu_2018_rate_asymptotic}. See \cite{mitake_rate_2023} for a convergence rate in time-fractional Hamilton-Jacobi equations. We refer to \cite{shen_convergence_2015} for a convergence rate in the almost periodic elliptic setting. To date, optimal rates of convergence for general nonconvex first-order cases have not been established.

{Going beyond the periodic setting, the qualitative results (convergence of \(u^\varepsilon \to u\)) were established for non-convex Hamiltonians in almost periodic environments \cite{Ishii_almost_periodic_1999}, and for convex Hamilton-Jacobi equations in stationary ergodic environments in \cite{souganidis_stochastic_1999} and \cite{rezakhanlou_homogenization_2000}. In strongly mixing media, a logarithmic-type rate of convergence for the homogenization of fully nonlinear uniformly elliptic second-order equations was established in \cite{caffarelli_rates_2010}. When the Hamiltonian exhibits finite range dependence (a continuum analogue of i.i.d.) in its spatial dependence, \cite{armstrong_error_2014} provided an algebraic rate for first-order time-dependent Hamilton--Jacobi equations. These assumptions are not applicable to the almost periodic setting, although they independently derive a modulus-based rate for the almost periodic case using their new idea and the approach in \cite{capuzzo-dolcetta_rate_2001}.

In this paper, we adopt an optimal control-based framework, following \cite{mitake_tran_yu_2019_rate_of_convergence_ARMA, tu_2018_rate_asymptotic}, instead of the method in \cite{capuzzo-dolcetta_rate_2001}. On top of that, our primary contribution is the development of new quantitative ergodic estimates for functions in the quasi-periodic setting, which are essential for this framework. In the periodic case, \(\overline{H}\) is known to be \(C^1\) when \(n = 1\) and \(H(x,p) = \frac{1}{2}|p|^2 - V(x)\) with \(V \in C^2\) periodic \cite{barnard_nonlinearity_2005}. However, in the almost-periodic setting, the effective Hamiltonian \(\overline{H}\) may not be \(C^1\) (see Lemma \ref{lem:BehaviorDerevativeEffectiveHbar}).  Our second contribution establishes a connection between the regularity of \(\overline{H}\) (Lemmas \ref{lem:BehaviorDerevativeEffectiveHbar}, \ref{lem:BehaviorHolderDerevativeEffectiveHbar}) and the rate of convergence in homogenization. Additionally, as a by-product, we derive a convergence rate for the Birkhoff average of unbounded quasiperiodic functions, which is of independent interest (Proposition \ref{prop:ASingleCaseOfApplicationErgodic}). Another challenge in adapting the method from \cite{mitake_tran_yu_2019_rate_of_convergence_ARMA, tu_2018_rate_asymptotic} to the quasi-periodic setting is the potential nonexistence of an exact corrector for \(\delta = 0\) \cite{lions_correctors_2003}. We overcome this by identifying regions with exact correctors and integrating these with information of \(\overline{H}\) inside the optimal control formula (Proposition \ref{thm:lower-bound2D}).

\subsection{Setting and main results} Let $\mathbb{T}^n = \mathbb{R}^n\backslash \mathbb{Z}^n$ be the flat $n$-dimensional torus. We focus on the mechanical Hamiltonian $H:\mathbb{R}^2\to \mathbb{R}$ of the form
\begin{equation}\label{eq:assumptionH}
    H(x,p) = \frac{1}{2}|p|^2 + V(x), 
\end{equation}
with the assumption
\begin{description}[style=multiline, labelwidth=1cm, leftmargin=1.8cm] 
    \item[\namedlabel{itm:A0} {$(\mathcal{A}_0)$}] $V$ is {\it quasi-periodic}, i.e.,
    $V(x) = -\mathbf{U}(\xi x)$ for some $\mathbf{U}\in \mathrm{C}(\mathbb{T}^n)$, and {\it non-resonant frequency $\xi\in \mathbb{R}^n$ ($ \kappa\cdot \xi\neq 0$ for all $\kappa\in\mathbb Z^n\backslash\{0\}$}). Without loss of generality, we assume $\min_{\mathbb{T}^n} \mathbf{U} = 0$. 
\end{description}
Such a $\mathbf{U} \in \mathrm{C}(\mathbb{T}^n)$ is called a {\it suspension} of $V(x)$. As we can see, generic $\mathbf{U}$ has a unique minimizer on $\mathbb{T}^n$. For $n=2$, we can simplify the situation into two prototypes: 

\begin{description}[style=multiline, labelwidth=1cm, leftmargin=1.8cm] 
    \item[\namedlabel{itm:A1} {$(\mathcal{A}_1)$}] 
        $\mathbf{U}(\xi_1,\xi_2) = \left(2 - \sin(2\pi \xi_1) - \sin(2\pi \xi_2)\right)^\gamma$;
    \item[\namedlabel{itm:A2} {$(\mathcal{A}_2)$}]  $\mathbf{U}(\xi_1,\xi_2) = \left(2 - \cos(2\pi \xi_1) - \cos(2\pi \xi_2)\right)^\gamma$;
\end{description}
\color{black}
where $(\xi_1,\xi_2)\in \mathbb{T}^2$ and $ \gamma>0$. Under \ref{itm:A1}, $V(x)$ never achieves the maximum for $x\in\R$, while under \ref{itm:A2}, $V(x)$ associated achieves the maximum at a finite point $x=0$. However, there is no essential difference between \ref{itm:A1} and \ref{itm:A2} in estimating the convergence rate. By examining these prototypes (with adjustable $\gamma> 0$), we can handle general $V(x)$ similarly. We say $\xi\in \mathbb{R}^n$ has a Diophantine index $\sigma = \sigma_\xi > 0$ if there exists $C_\xi> 0$ such that $|\xi\cdot \kappa| \geq C_\xi |\kappa|^{-\sigma}$ for all $\kappa\in \mathbb{Z}^n\backslash\{0\}$.
It is known that if $\xi \in \R^n$ is non-resonant then $\sigma_\xi \geq n-1$ (see \cite{JurgenKAM1999}).

\begin{theorem}
\label{thm:rate-C1-H-bar-prototype} 
For $n=2$, we assume \ref{itm:A1} or \ref{itm:A2} and $\sigma_\xi = 1$. There exists $C = C(T)>0$ independent of $\varepsilon \in (0,1)$ such that, for $(x,t)\in \R\times [0,T]$ then
\begin{align}
    -C\varepsilon &\leq u^\varepsilon(x,t) - u(x,t) \leq 
    C\varepsilon^{\frac{\gamma-2}{3\gamma-2}} 
    & &  \text{if}\; \gamma >2, \label{eq:upper-bound-theta-gamma-bigger-2} \\
   -C\varepsilon^{1/2} &\leq u^\varepsilon(x,t) - u(x,t) \leq 
    \frac{C}{|\log(\varepsilon)|}, 
    & &   \text{if}\; \gamma =2.  \nonumber
\end{align}
Besides, we have the lower bound for all $\gamma>0$ as follows:
\begin{equation}\label{eq:lower-bound}
    u^\varepsilon(x,t) - u(x,t) \geq 
    \begin{cases}
        -C\varepsilon^{\frac{\gamma}{\gamma+1}}, 
                &\quad \gamma \in (0,1),\\
        -C\varepsilon^{1/2},       
                &\quad \gamma \in [1,2],
    \end{cases} \qquad\qquad\qquad\text{if}\; \gamma \in (0,2). 
\end{equation}
\end{theorem}
The smoothness of $\overline{H}$ (when $\gamma\geq 2$, for example, under condition \ref{itm:A1} or \ref{itm:A2}) is crucial for achieving an upper bound. Without this condition, only a lower bound is obtained as in \eqref{eq:lower-bound}.  In the quasi-periodic setting, \(\overline{H}\) may not be \(C^1\), as noted in Lemma \ref{lem:BehaviorDerevativeEffectiveHbar}, unlike in the periodic case where it is \(C^1\) (see \cite[Proposition 2.7]{barnard_nonlinearity_2005}). \color{black}

\begin{remark} \quad 
\begin{itemize}
\item[(i)] As an example, consider \( H(x,p) = \frac{1}{2}|p|^2 - \big(2 - \sin(2\pi x) - \sin(2\pi \sqrt{2}x)\big)^6 \) for \((x,p)\in \mathbb{R}^2\). Theorem \ref{thm:rate-C1-H-bar-prototype} implies that
\[
-C\varepsilon \leq u^\varepsilon(x,t) - u(x,t) \leq C\varepsilon^{1/4}.
\]
This example demonstrates that fractional convergence rates indeed occur (according to our scheme). However, it remains uncertain whether the \( \mathcal{O}(\varepsilon^{1/4}) \) rate is optimal for this example, and this requires further investigation. 
\item[(ii)] Inequality \eqref{eq:upper-bound-theta-gamma-bigger-2} actually holds for any frequency $\xi = (\xi_1,\xi_2)$ with $\sigma_\xi < 2$ (see Corollary \ref{coro:RelaxationOfSigmaXi2}). If $\omega \in\R$ is an \emph{algebraic irrational of degree} $d$, meaning it is a root of a polynomial of degree $d\in \mathbb{N}$ with integer coefficients, then $(\omega, 1)\in\R^2$ has a Diophantine index $ 1 + \delta$ with $\delta\in [0,1)$ (see  Roth's Theorem in \cite{cassels_1957}). In Theorem \ref{thm:rate-C1-H-bar-prototype}, condition $\sigma_\xi = 1$ holds if $\xi_1/\xi_2$ is algebraic irrational of degree $2$, for instance  $\xi = (1,\sqrt{2})$. 
\end{itemize}
\end{remark}
\color{black}

Inspired by Theorem \ref{thm:rate-C1-H-bar-prototype}, now we state our conclusion for general $n\geq 2$.  Let $\alpha\in (0,1]$\color{black}. The following conditions are necessary for different quantitative estimates (see Section  \ref{Sect:preliminaries-almost-periodic} for the definition of $H^s(\T^n)$ and  $\C^{0,\alpha}(\T^n)$):

\begin{description}[style=multiline, labelwidth=1cm, leftmargin=1.8cm] 
        \item[\namedlabel{itm:P1} {$(\mathcal{P}_1)$}] Assume $\xi \in \R^n$ has a Diophantine index $\sigma_\xi$, and $\mathbf{U}^{1/2}\in H^s(\T^n)$ for some $s> \frac{n}{2} + \sigma_\xi$.

        \item[\namedlabel{itm:P2} {$(\mathcal{P}_2)$}] Assume $\mathbf{U}^{1/2}\in \C^{0,\alpha}(\T^n)$ and $\xi \in \Omega_n \subset \R^n$, where $\Omega_n$ is a set of full measure (defined in \eqref{eq:defOmeganFullMeasure}).

        \item[\namedlabel{itm:P4New3} {$(\mathcal{P}_3)$}] Assume $\mathbf{U}^{1/2}\in \C^{0,\alpha}(\T^n)$ and $\xi = (\xi_1,\ldots, \xi_n)\in \R^n$ is non-resonant with $\xi_i$ is algebraic irrational for $i=1,\ldots, n$.
        
        \item[\namedlabel{itm:P3} {$(\mathcal{P}_4)$}] Assume $n=2$, $\mathbf{U}^{1/2}\in \C^{0,\alpha}(\T^2)$ and $\xi = (\xi_1,\xi_2) \in \R^2$ with $\sigma_\xi = 1$. 
    \end{description}

\begin{theorem}\label{thm:n-frequency-lower-bound} Assume \ref{itm:A0} and $n\geq 2$. There exists a constant $C = C(n, \alpha, \xi, \mathbf{U}^{1/2}, T)$ such that for all $(x,t)\in \R \times [0,T]$ then
\begin{numcases}
    {u^\varepsilon(x,t) - u(x,t) \geq}
        -C\varepsilon & \text{under \ref{itm:P1}}
        \label{eq:casesP1},\\
        -C \varepsilon^{\frac{\alpha}{\alpha+n-1}}|\log(\varepsilon)|^{3(n-1)}  & \text{under \ref{itm:P2}}
        \label{eq:casesP2},\\
        -C_\nu \varepsilon^{\frac{\alpha}{\alpha+n-1 + \nu} }& \text{under \ref{itm:P4New3}}
        \label{eq:casesP4New3},\\
         -C \varepsilon^{\frac{\alpha}{\alpha+1}} & \text{under \ref{itm:P3}}
         \label{eq:casesP3}. 
    \end{numcases}
    Furthermore, if the associated $\overline{H}\in \C^{1,\beta}(\R)$ then there exists $C = C(n, \alpha, \xi, \mathbf{U}^{1/2}, T)$ such that
\begin{numcases}
    {u^\varepsilon(x,t) - u(x,t) \leq}
        C\varepsilon^\frac{\beta}{\beta+1} & \text{under \ref{itm:P1}}
        \nonumber,\\
        C \varepsilon^{\frac{\beta}{\beta+1}\frac{\alpha}{\alpha+n-1}}|\log(\varepsilon)|^{3(n-1)}  & \text{under \ref{itm:P2}}
        \nonumber,\\
        C_\nu \varepsilon^{\frac{\beta}{\beta+1}\frac{\alpha}{\alpha+n-1 + \nu}} & \text{under \ref{itm:P4New3}}
        \label{eq:casesP4upper},\\
        C \varepsilon^{\frac{\beta}{\beta+1}\frac{\alpha}{\alpha+1}} & \text{under \ref{itm:P3}}
        \nonumber. 
\end{numcases}
In \eqref{eq:casesP4New3} or \eqref{eq:casesP4upper}, we mean that for every $0<\nu \ll 1$, there exists $C_\nu>0$ such that \eqref{eq:casesP4New3} or \eqref{eq:casesP4upper}, respectively, holds.
\end{theorem}

\begin{remark}  If $n=2$ then \ref{itm:P1} holds as long as $\mathbf{U}^{1/2}\in H^{s}(\T^2)$ for some $s>2$ and $\xi_1/\xi_2$ is algebraic irrational. 
We also point out, the validity of condition $\overline{H}\in \C^{1,\beta}$ relies on  specific forms of the potential $\mathbf{U}$, e.g. $\overline{H}\in \C^{1/2-1/\gamma}(\R)$ for \ref{itm:A1} or \ref{itm:A2} with $\gamma>2$. If $\overline{H}\in \C^1(\R)$ with $\overline{H}'$ continuous with respect to a modulus $\varpi$,  a rate of convergence can still be obtained in terms of $\varpi$. To avoid making the statement too technical, we do not include this point into Theorem \ref{thm:n-frequency-lower-bound}. 
\end{remark}

\subsection{Convergence rate of Birkhoff average under ergodic transformations of $\mathbb T^n$} During the proof of the upper bound estimate in Theorem \ref{thm:n-frequency-lower-bound}, we obtained the convergence rate of calibrated curves to their rotation vectors under the lower regularity of the effective Hamiltonian, which partially answers an open question raised in \cite[eq. (3) of Section 5.6]{tran_hamilton-jacobi_2021}. The question is about the rate of  convergence 
of general Birkhoff average under a torus translation with an irrational frequency. Although the {\it Birkhoff's ergodic theorem} guarantees the convergence under rather general conditions, quite few results have ever been made concerning the rate. For one dimensional torus maps, {\it Denjoy-Koksma’s inequality} proven by M. Herman in \cite[Chapter VI.3]{Herman1979French} gives the rate for functions with 
bounded variation, i.e. 
\begin{equation*}
    \left\Vert  \frac1 N\sum_{i=0}^{N-1} f(x+i\xi)-\int_{\mathbb T} fdx \right\Vert _{L^\infty}\leq \frac{{\rm Var}(f)}{N}
\end{equation*} 
for any $f\in BV(\mathbb T)$, $\xi\in\R\backslash\mathbb Q$ and $N\in\mathbb Z^+$ sufficiently large. Recently, this result has been generalized  to arbitrary dimension $n\geq 2$ in  \cite{KleinLiuMelo2021}, where a uniform rate of convergence was given for H\"older continuous functions $f\in \C^{0,\alpha}(\mathbb T^n)$ and Diophantine frequency $\xi\in\mathbb R^n$.

In this paper, we obtain a rate of Birkhoff average for \emph{unbounded functions} of a quasi-periodic setting. Precisely, for $f(x)={\bf V}(\xi x)$ satisfying \ref{itm:A0}, we proved the existence of the {\it mean value} of $1/f(x)$:
\begin{equation*}
    \mathcal{M}\left(\frac{1}{f}\right): =\lim_{T\to \infty} \frac{1}{T}\int_0^T \frac{dx}{f(x)},
\end{equation*}
and estimated the convergence rate of this limit. As we can see, $1/f(x)$ is of neither H\"older continuity nor bounded variation. So our result is totally new.

\begin{proposition}\label{prop:ASingleCaseOfApplicationErgodic} Assume \ref{itm:A1} and $\sigma_\xi = 1$. For $\gamma\in(0,\infty)$ we have: 
\begin{equation*}
\begin{aligned}
    \frac{1}{T}\int_0^T \frac{dx}{\mathbf{U}(\xi x)^{1/2}} &\geq  
    \begin{cases}
    \begin{aligned}
         & \textstyle CT^{\tau},  && \textstyle \tau = \frac{(\gamma-2)(3\gamma-2)}{(\gamma-2)(3\gamma-2)+4\gamma^2}, \quad & & \gamma \in (2,\infty), \\
        & \textstyle C|\log(T)|,  &&  & & \gamma = 2, 
    \end{aligned}
    \end{cases} \\ 
    \left|\frac{1}{T}\int_0^T \frac{dx}{\mathbf{U}(\xi x)^{1/2}} - \int_{\T^2} \frac{d\x}{\mathbf{U}(\x)^{1/2}}\right| &\leq 
        \begin{cases}
        \begin{aligned}
        & \textstyle CT^{-\tau}, \qquad\qquad\qquad\,  && \textstyle \tau = \frac{1}{2}\frac{2-\gamma}{2+\gamma}, & & \gamma \in [1,2),  \\      
        & \textstyle CT^{-\tau}, && \textstyle \tau = \frac{\gamma}{1+\gamma}\frac{2-\gamma}{2+\gamma}, & & \gamma \in \textstyle (\frac{2}{3},1), \\      
        & \textstyle CT^{-1/5}|\log(T)|, &&  & & \gamma = \textstyle \frac{2}{3}, \\      
        & \textstyle CT^{-\tau}, && \textstyle \tau =  \frac{1}{2}\frac{\gamma}{1+\gamma}, & & \gamma \in \textstyle (0, \frac{2}{3}) .
        \end{aligned}
        \end{cases}
\end{aligned}
\end{equation*}
Furthermore, the result for $\gamma>2$ holds for all non-resonant $\xi\in \R^2$ with $\sigma_\xi < 2$.
\end{proposition}

The result can also be drawn to \ref{itm:A2}, provided we replace the integral on $[a,a+T]$ such that it is well-defined (avoiding the singularity). Besides, Proposition \ref{prop:ASingleCaseOfApplicationErgodic} can be  be generalized to arbitrary $\mathbf{U}$ satisfying \ref{itm:A0} without additional difficulties (see Remark \ref{rmk:GeneralErgodic} for a general procedure).

\subsection{Organization of the paper}

We begin with background on almost periodic functions and the key ergodic estimate in Section \ref{Sect:preliminaries-almost-periodic}. In Section \ref{Sec:effectiveHamiltonian}, we discuss the representation and regularity of the effective Hamiltonian. Section \ref{Sect:rate2D} combines the findings to prove the Theorem \ref{thm:rate-C1-H-bar-prototype} for potential of prototype \ref{itm:A1} or \ref{itm:A2}, and provides a sketch of the proof for Theorem \ref{thm:n-frequency-lower-bound}. In Section \ref{sec:rateND}, we improve the ergodic estimates for critical cases of characteristics, and apply these improvements to quantify the convergence of Birkhoff average  as seen in Proposition \ref{prop:ASingleCaseOfApplicationErgodic}, along with discussions and open questions.

\section{Almost periodic functions and quantitative ergodic estimates}\label{Sect:preliminaries-almost-periodic} 

\subsection{Notations and definitions} 
\label{Section:Appendix}
We list some function spaces that we use throughout the paper for reference. We denote by $\mathrm{C}^0(\mathbb{R}) = \mathrm{C}(\mathbb{R})$ the set of continuous function on $\mathbb{R}$, and $\mathrm{C}^k(\mathbb{R})$ ($k\in \mathbb{N}$) the set of function such that its $k$-th derivative $f^{(k)} \in \mathrm{C}(\mathbb{R})$. For $\alpha\in (0,1)$, the H\"older space $\C^{0,\alpha}(\R)$ is the set of continuous functions $f$ such that 
    \begin{equation*}
        [f]_{\C^{0,\alpha}(\R)}:= 
        \sup_{x\in \R} \sup_{h\neq 0} \frac{|f(x+h)-f(x)|}{|h|^\alpha} < \infty,
    \end{equation*}
    with $\Vert f\Vert_{\C^{0,\alpha}(\R)}:= \Vert f\Vert_{\C^{1}(\R)} + [f]_{\C^{0,\alpha}(\R)}$. For $\kappa = (\kappa_1,\ldots,\kappa_n)\in \mathbb{Z}^n$ and $\alpha = (\alpha_1,\ldots, \alpha_n) \in (\mathbb{Z}_{\geq 0})^n$ is a multi-index, we define $|\kappa| = \big(|\kappa_1|^2+\ldots+|\kappa_n|^2\big)^{1/2}$ and $\kappa^\alpha = \kappa_1^{\alpha_1} \ldots \kappa_n^{\alpha_n}$. 

\begin{definition} For $f\in L^1(\mathbb{T}^n)$, we define the Fourier transform $\mathcal{F}(f) = \widehat{f}: \mathbb{Z}^n \to \mathbb{C}$ by \begin{equation*}
    \widehat{f}(\kappa) = \int_{\mathbb{T}^n} f(\x)e^{-2\pi i \kappa \cdot \x}\;d\x, \qquad \kappa \in \mathbb{Z}^n.
\end{equation*}
The Fourier series of $f$ is defined by $\sum_{\kappa\in \mathbb{Z}^n} \widehat{f}(\kappa) e^{2\pi i \kappa\cdot \x}$ for $\x\in \mathbb{T}^n$
as a well-defined function in $L^2(\mathbb{T}^n)$. The Sobolev space $H^{s}(\mathbb{T}^n)$ is defined by 
\begin{equation*}
    H^{s}(\mathbb{T}^n) = \left\lbrace f\in \mathcal{D}'(\mathbb{T}^n): (1+|\kappa|^2)^{s/2}|\widehat{f}(\kappa)| \in \ell^2(\mathbb{Z}^n) \right\rbrace,      
\end{equation*}
with $\Vert f\Vert_{H^s} = \left(\sum_{\kappa\in \mathbb{Z}^n} (1+|\kappa|^2)^{s/2} |\widehat{f}(\kappa)|^2\right)^{1/2}$, where $\mathcal{D}'(\mathbb{T}^n)$\label{def:SobolevHs} is the set of distributions on $\mathbb{T}^n$. 
\end{definition}    

For $k\in \mathbb{N}$, let $W^{k,p}(\mathbb{T}^n) = \left\lbrace f\in L^p(\mathbb{T}^n): \partial^\alpha f\in \ell^p(\mathbb{Z}^n)\;\text{for}\;|\alpha|\leq k \right\rbrace$, where $\partial^{\alpha}f$ is the distributional derivative of $f$, and $ \Vert f\Vert_{W^{k,p}(\mathbb{T}^n)} = \sum_{|\alpha|\leq k} \Vert D^\alpha f\Vert_{L^p(\mathbb{T}^n)}$. It is known that if $k\in \mathbb{N}$ then $H^k(\T^n)$ is equivalent to $W^{k,2}(\T^n)$. 
In what follows, we list some useful properties of $H^s(\T^n)$ that we will be using.

\begin{lemma}[Interpolation]\label{prop:Holder-Sobolev} Let $0<s_0<s_1$ and $f\in H^{s_1}(\mathbb{T}^n)$. If $ s = \theta s_1 + (1-\theta)s_0 \in (s_0,s_1)$ for some $\theta \in (0,1)$, then $\Vert f \Vert_{H^{s}(\mathbb{T}^n)}  
    \leq 
    \Vert f \Vert_{H^{s_1}(\mathbb{T}^n)}^\theta 
    \Vert f \Vert_{H^{s_0}(\mathbb{T}^n)}^{1-\theta}$. 
\end{lemma}

\begin{proposition}[Uniform convergence]\label{prop:uniform-convergence-Fourier-series} Let $f\in H^{s}(\mathbb{T}^n)$ for some $s>\frac{n}{2}$. Then the Fourier series of $f$ converges to $f$ with respect to the norm of $\mathrm{C}^0(\mathbb{T}^n)$.
\end{proposition}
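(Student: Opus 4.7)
The plan is to combine Sobolev embedding with the Weierstrass M-test, and then identify the uniform limit with $f$ via $L^2$ convergence of the partial Fourier sums.

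First, I would invoke Proposition \ref{prop:sobolev-embedding} with $k=0$. Since $s > n/2$, the hypothesis $s > k + n/2$ is satisfied, so $\widehat{f} \in \ell^1(\mathbb{Z}^n)$ with the quantitative bound $\Vert \widehat{f}\Vert_{\ell^1(\mathbb{Z}^n)} \leq C(s,n) \Vert f\Vert_{H^s(\mathbb{T}^n)}$. This $\ell^1$-summability is the real engine of the proof.

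Next, for $N \in \mathbb{N}$ I would define the partial sum
\begin{equation*}
    S_N(\x) = \sum_{|\kappa|\leq N} \widehat{f}(\kappa) e^{2\pi i \kappa\cdot \x}, \qquad \x \in \mathbb{T}^n.
\end{equation*}
Since $\bigl|\widehat{f}(\kappa)e^{2\pi i \kappa\cdot \x}\bigr| = |\widehat{f}(\kappa)|$ and this majorant is summable, the Weierstrass M-test yields that $\{S_N\}$ is Cauchy in $\mathrm{C}^0(\mathbb{T}^n)$ (this is exactly the content of Proposition \ref{prop:reconstruction} with $m=0$). Therefore $S_N$ converges uniformly on $\mathbb{T}^n$ to some $g \in \mathrm{C}^0(\mathbb{T}^n)$ with $\widehat{g}(\kappa) = \widehat{f}(\kappa)$ for every $\kappa \in \mathbb{Z}^n$.

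Finally, I would identify $g$ with $f$. Because $f \in H^s(\mathbb{T}^n) \hookrightarrow L^2(\mathbb{T}^n)$, the partial sums $S_N$ also converge to $f$ in $L^2(\mathbb{T}^n)$ by Plancherel. Uniform convergence on the compact set $\mathbb{T}^n$ implies $L^2$ convergence, so $S_N \to g$ in $L^2$ as well; by uniqueness of the $L^2$-limit, $g = f$ almost everywhere. Since $\widehat{f} = \widehat{g} \in \ell^1$, $f$ admits the continuous representative $g$, and we conclude that the Fourier series converges to $f$ in the norm of $\mathrm{C}^0(\mathbb{T}^n)$. There is no genuine obstacle here; the only subtlety worth flagging is that the statement should be read as identifying $f$ with its (unique) continuous representative, which is automatic from the $\ell^1$ conclusion of Proposition \ref{prop:sobolev-embedding}.
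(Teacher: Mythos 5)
Your argument is exactly the paper's: Proposition \ref{prop:sobolev-embedding} with $k=0$ gives $\widehat{f}\in\ell^1(\mathbb{Z}^n)$, and Proposition \ref{prop:reconstruction} with $m=0$ gives uniform convergence of the partial sums. The only difference is that you spell out the identification of the uniform limit with $f$ via uniqueness of the $L^2$-limit, a step the paper's proof leaves implicit; this is a correct and welcome clarification, not a different method.
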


\begin{proof} For $N \in \mathbb{N}$, we define $f_N(\x) = \sum_{|\kappa| \leq N} a_\kappa e^{2\pi i \kappa \cdot \x}$ for $\x\in \mathbb{T}^n$.  By the Sobolev Embedding Theorem \color{black} {\cite[Theorem 9.17]{folland_book_real_analysis}} we have $\sum_{\kappa \in \mathbb{Z}^n} |\widehat{f}(\kappa)| < \infty$. Therefore $\{f_N\}_{N\in \mathbb{N}}$ is a Cauchy sequence with respect to $\mathrm{C}^0(\mathbb{T}^n;\mathbb{C})$ norm. By the completeness of $\mathrm{C}^0(\mathbb{T}^n;\mathbb{C})$ there exists $g\in \mathrm{C}^0(\mathbb{T}^n;\mathbb{C})$ such that $\lim_{N\to \infty} \Vert f_N - g\Vert_{\C^{0}(\mathbb{T}^n)} = 0$. It is clear that $\widehat{g}(\kappa) = a_\kappa$ for $\kappa \in \mathbb{Z}^n$, hence $f=g$ and thus $f_N(\x)$ converges to $f(\x)$ uniformly in $\mathrm{C}^0(\mathbb{T}^n)$. 
\end{proof}

\begin{remark}\label{remark:embedding}
The Sobolev Embedding Theorem relates $H^s(\T^n)$ with $\mathrm{C}^{0,\alpha}(\T^n)$ by the following
\begin{equation*}
    H^s(\T^n)\subset \mathrm{C}^{0,\alpha}(\T^n),\quad \text{where }\;\;0<\alpha < \min \lbrace s-{n}/{2}, 1\rbrace.
\end{equation*}
In subsection \ref{ss:ergo-est}, we will see how these different spaces lead to different convergence rates of the Birkhoff average, influencing the outcomes of \ref{itm:P1}---\ref{itm:P3} in Theorem \ref{thm:n-frequency-lower-bound}. 
\end{remark}

\subsection{Almost periodic functions} We follows \cite{bohr_almost_periodic_1947} with the following Definition of almost periodic functions. For $\varepsilon>0$ and $f:\R\to \R$, we say that $\tau$ is an {\it $\varepsilon$-period} of $f$, if $|f(x+\tau) - f(x)| < \varepsilon$ for all $x\in \R$.

\begin{definition} We say that $f:\R\to\R$ is \emph{uniformly almost periodic} if for any $\varepsilon>0$, the set of $\varepsilon$-periods
\begin{equation*}
	E(\varepsilon, f) = \{\tau\in \mathbb{R}:  |f(x+\tau) - f(x)|<\varepsilon\;\text{for all}\;x\in \R\}\;\text{is \emph{relatively dense} in}\;\R.
\end{equation*}
In other words, there exists $l_\varepsilon(f) > 0$ such that every interval of the form $[x,x+l_\varepsilon(f)]$ contains a $\varepsilon$-periods. We say that $l_\varepsilon(f)$ is an \emph{inclusion interval length} of $E(\varepsilon,f)$. The set of such functions is denoted by $\mathrm{AP}(\mathbb{R})$.
\end{definition}
Notice that $\mathrm{AP}(\R)\subset \mathrm{BUC}(\R)$ (see \cite{Katznelson2004}). Given $f\in \mathrm{BUC}(\R)$, an equivalent definition (\cite[Theorem 5.5]{Katznelson2004}) of almost periodicity is the \emph{uniformly normal} property given by Bochner, i.e., the family of functions $\{f(\cdot + h): h\in \mathbb{R}\}$ is relatively compact in $\mathrm{BUC}(\mathbb{R})$ (see \cite{armstrong_error_2014, Ishii_almost_periodic_1999, tran_hamilton-jacobi_2021}). Some important properties of almost periodic functions include the following.

\begin{lemma}[{\cite[Section 5]{Katznelson2004}}] If $f\in \mathrm{AP}(\mathbb{R})$ then the mean value $\mathcal{M}(f)$ of $f$ exists, and
\begin{align*}
    \mathcal{M}(f) 
    &= \lim_{T\rightarrow \infty} \frac{1}{2T}\int_{-T}^{T} \;f(x)\;dx 
     = \lim_{T\rightarrow -\infty} \frac{1}{T}\int_{T}^0 f(x)\;dx = \lim_{T\rightarrow +\infty} \frac{1}{T}\int_{0}^T f(x)\;dx.
\end{align*}
Furthermore, if $f\geq 0$ then $\mathcal{M}(f) > 0$ except $f\equiv 0$.
\end{lemma}

\begin{lemma}[{\cite[Theorem 1.4 and p. 14]{besicovitch1954almost}}]\label{prop:ap:uap-ergodic-estimate-eps-l-eps} Let $f\in \mathrm{AP}(\mathbb{R})$, $\varepsilon>0$ and $l_\varepsilon(f)$ be the length of the inclusion interval. Then $\sup_{\mathbb{R}} |f| \leq \sup_{[0,l_\varepsilon(f)]}  |f| + \varepsilon$ and furthermore, for $T>0$ we have
\begin{equation*}
    \left|\frac{1}{T}\int_0^T f(x)\;dx - \mathcal{M}(f)\right|   \leq  \varepsilon + 2\Vert f\Vert_{L^\infty(\mathbb{R})} \frac{ l_\varepsilon(f)}{T}.
\end{equation*}
\end{lemma}

\subsection{Convergence rate of Birkhoff average for bounded quasi-periodic functions}\label{ss:ergo-est}

Recall that for periodic function $f\in \mathrm{BUC}(\mathbb{R})$ with a period $L$, $\mathcal{M}(f) = \frac{1}{L}\int_0^L f(x)dx$. 
Due to \cite[Lemma 4.2]{mitake_tran_yu_2019_rate_of_convergence_ARMA} or \cite{Neukamm2017, tu_2018_rate_asymptotic}, we have
\begin{equation*}
    \left|\frac{1}{T}\int_0^{T} f(x)\;dx - \mathcal{M}(f)\right| \leq \frac{\mathcal{M}(f)}{T},
\end{equation*}
where the right-hand side is optimal. In the quasi-periodic setting, such a  rate of convergence is not always available, unless certain conditions on the non-resonance of $\xi$ and the regularity of the potentials are supplied. For this purpose, the following Lemma is needed:

\begin{lemma}\label{prop:ap:QP-implies-AP-and-sup-inf} Let $\F\in \mathrm{C}(\mathbb{T}^n;\mathbb{C})$ and $\xi = (\xi_1,\ldots,\xi_n)\in \mathbb{R}^n$ be nonresonant. Then $f(x) = \F(\xi x)\in \mathrm{AP}(\mathbb{R})$ with $\sup_{\mathbb{R}} |f| = \max_{\mathbb{T}^n} |\F|$ and $\inf_{\mathbb{R}} |f| = \min_{\mathbb{T}^n} |\F|$. Furthermore $\mathcal{M}(f) = \int_{\mathbb{T}^n} \F(\x)\;d\x$. 
\end{lemma}

\begin{proof} The facts that $\sup_{\mathbb{R}} |f| = \max_{\mathbb{T}^n} |\F|$ and $\inf_{\mathbb{R}} |f| = \min_{\mathbb{T}^n} |\F|$ are consequence of Kronecker's Theorem (see \cite{Katznelson2004}). 
If $\F\in \C(\mathbb{T}^n)$, for each $\varepsilon > 0$,  there exists a finite subset $S_\varepsilon\subset \mathbb{Z}^n$ and a trigonometric polynomial $ P_{\varepsilon}(\textbf{x}) = \sum_{\kappa\in S_\varepsilon} a_\kappa \, e^{2\pi i \kappa\cdot \textbf{x}}$ such that $\Vert \F-P_\varepsilon\Vert_{\mathrm{C}(\mathbb{T}^n)} < \varepsilon$ and $a_\textbf{0} = \int_{\mathbb{T}^n} \F(\x)\;d\x$. Indeed, $P_\varepsilon$ can be chosen to be the truncated of the Fourier series $\F*\eta_\varepsilon$ where $\eta_\varepsilon$ be the standard mollifiers. We have
\begin{align*}
    &\lim_{T\to\infty} \frac{1}{T}\int_0^T P(\xi x)\;dx = \lim_{T\to\infty} \frac{1}{T}\int_0^T \sum_{\kappa\in S_\varepsilon} a_{\kappa}\, e^{2\pi i \kappa\cdot \xi x}\;dx = a_0 + \lim_{T\to\infty} \frac{1}{T} \sum_{\kappa\in S_\varepsilon\backslash\{0\}} a_\kappa\frac{e^{2\pi i \kappa\cdot \xi T}-1}{2\pi i \kappa\cdot\xi} = a_0.
\end{align*}
Thus
\begin{equation*}
    \frac{1}{T}\int_0^T f(x)\;dx - a_\textbf{0} = \frac{1}{T}\int_0^T \Big(\F(\xi x)-P_\varepsilon(\xi x)\Big)\;dx + \left(\frac{1}{T}\int_0^T P_\varepsilon(\xi x)\;d x - a_\textbf{0}\right).
\end{equation*}
Hence
\begin{equation*}
    \limsup_{T\to\infty}\left|\frac{1}{T}\int_0^T f(x)\;dx - \int_{\mathbb{T}^n} \F(\x)\;d\x\right| 
    \leq 
    \Vert \F-P_\varepsilon\Vert_{\mathrm{C}^0(\mathbb{T}^n)} + \lim_{T\to\infty}\left|\frac{1}{T}\int_0^T P_\varepsilon(\xi x)\;dx - a_\textbf{0}\right| <\varepsilon.
\end{equation*}
Since $\varepsilon$ is chosen arbitrary we obtain the result.
\end{proof}

\begin{proposition}\label{prop:ap:rate-Mean-QR} Let $\xi = (\xi_1,\ldots,\xi_n) \in \mathbb{R}^n$ be a Diophantine frequency with the index $\sigma_\xi$ and $\F\in H^{s}(\mathbb{T}^n)$ for some $s > \frac{n}{2} + \sigma_\xi$. There exists $C = C(n,s)$ such that, for every $T>0$ we have
\begin{equation*}
    \left|\frac{1}{T}\int_{0}^T \F(\xi x)\;dx - \int_{\mathbb{T}^n} \F(\x)\;d\x \right| \leq \frac{C(n, s)\Vert \F\Vert_{H^{s}(\mathbb{T}^n)}}{T}.
\end{equation*} 
\end{proposition}

\begin{proof} For $N\in \mathbb{N}$ we define $\F_N(x) = \sum_{|\kappa| \leq N} \widehat{\F}(\kappa) e^{2\pi i \kappa \cdot x}$ for $x\in \mathbb{T}^n$. Since $\F_N\to \F$ uniformly in $\mathrm{C}^0(\mathbb{T}^n)$ (see Proposition \ref{prop:uniform-convergence-Fourier-series}), $\sup_{x\in \mathbb{R}} |\F_N(\xi x) - \F(\xi x)| \leq \sup_{\x\in \mathbb{T}^n} |\F_N(\x) - \F(\x)| \to 0$ as $N\to \infty$. We have 
\begin{align*}
    \frac{1}{T}\int_0^T \F_N(\xi x)\;dx 
    &= 
    \frac{1}{T} \int_0^T \sum_{|\kappa|\leq N} \widehat{\F}(\kappa) e^{2\pi i \kappa \cdot \xi x}\;dx
    = \int_{\mathbb{T}^n} \F(\textbf{x})\;d\textbf{x} + \frac{1}{T}\sum_{0<|\kappa|\leq N} \widehat{\F}(\kappa) 
    \left(
        \frac{e^{2\pi i \kappa \cdot \xi T} - 1}{2\pi i \kappa \cdot \xi}
    \right).
\end{align*}
From the Diophantine condition of $\xi$, for any $s>0$ that
\begin{align*}
    &\sum_{0<|\kappa|\leq N} \big|\widehat{\F}(\kappa)\big| 
    \left|
        \frac{e^{2\pi i \kappa \cdot \xi T} - 1}{2\pi i \kappa \cdot \xi}
    \right| 
    \leq 
    \frac{1}{\pi C(\xi)}
    \sum_{0<|\kappa|\leq N} |\kappa|^\sigma \big|\widehat{\F}(\kappa)\big|   
    \leq \frac{1}{\pi C(\xi)}\sum_{0<|\kappa|\leq N}  
    \left(|\kappa|^{-\frac{n+s}{2}}\right)\left( |\kappa|^{\frac{n+s}{2} + \sigma} |\widehat{\F}(\kappa)| \right)\\
    &\qquad\qquad\qquad 
        \leq \frac{1}{\pi C(\xi)} 
    \left(\sum_{0<|\kappa|\leq N} \frac{1}{|\kappa|^{n+s}} \right)^{1/2}
    \left(\sum_{0<|\kappa|\leq N} |\kappa|^{n+s+2\sigma} |\widehat{\F}(\kappa)|^2\right)^{1/2} 
    \leq \frac{C(n,s)}{\pi C(\xi)} \Vert \F\Vert_{H^{\frac{n+s}{2}+\sigma}(\mathbb{T}^n)}.
\end{align*}
Therefore, we have 
\begin{align*}
    \left|\frac{1}{T}\int_0^T \F_N(\xi x) \;dx - \int_{\mathbb{T}^n} \F(\textbf{x})\;d\textbf{x}\right| \leq \frac{1}{T}\left( \frac{C(n,s)}{\pi C(\xi)} \right)\Vert \F\Vert_{H^{\frac{n+s}{2}+\sigma}(\mathbb{T}^n)}. 
\end{align*}
As $N\to \infty$ we obtain the conclusion.
\end{proof}

In view of  Remark \ref{remark:embedding}, we now present results concerning the case $\F\in \C^{0,\alpha}(\T^n)$.

\begin{lemma}\label{lem:P4RateSec2} Let $n\geq 2$ and $\omega = (\omega_1,\ldots, \omega_{n-1}, 1) \in \R^{n}$ be a non-resonant vector with $\omega_i$ is algebraic irrational for $i=1,\ldots, n-1$. Let $\F\in \C^{0,\alpha}(\T^{n})$ and $f(x) = \F(\omega x)$ for $x\in \R$. 
For any $0 < \nu \ll 1$, there exists $C_\nu = C(n, \omega, \alpha, \nu, \F)$ such that $l_\varepsilon(f) \leq C_\nu \varepsilon^{-\frac{n-1}{\alpha} - \nu}$. 
\end{lemma}

\begin{proof} Let $C_\F$ be the H\"older constant of $\F$, in the sense that, if $\mathbf{x} = (x_1,\ldots, x_{n}), \mathbf{y} = (y_1,\ldots, y_{n}) \in \T^{n}$ then $|\F(\x) - \F(\mathbf{y})| \leq C_\F\sum_{i=1}^{n}|x_i-y_i|^\alpha$. For $\omega \in \mathbb{R}\backslash \mathbb{Q}$ and $q\in \mathbb{Z}\backslash \{0\}$, we define $\Vert \omega \cdot q\Vert = \inf_{\kappa\in \mathbb{Z}} |\omega\cdot q - \kappa|$. To simplify the notation further, we denote $m = n - 1$ so that $\omega = (\omega_1,\ldots, \omega_{n-1},1) = (\omega_1,\ldots, \omega_{m},1) \in \R^{m+1}$. 
We claim there are only finitely many solutions $(q, R) \in \mathbb{Z} \times \mathbb{R}^+$ to
\begin{equation*}
    \sup_{i=1,\ldots, n}\Vert \omega_i \cdot q\Vert < R^{-\frac{1}{m} - \delta} \qquad\text{and}\qquad 
    |q| < R^{\frac{1}{\left(1+m\delta\right)}}.
\end{equation*}
If the reverse is true, there exist infinitely many solutions $q \in \mathbb{Z}$ to the equation:
\begin{equation*}
     \sup_{i=1,\ldots, m}\Vert \omega_i \cdot q\Vert  < R^{-\left(\frac{1}{m}+\delta\right)} < |q|^{-\left(\frac{1}{m}+\delta\right)\left(1+m\delta\right)} = |q|^{-\left(\frac{1}{m}+\theta\right)}
\end{equation*}
for some $\theta > 0$. This contradicts Schmidt's Theorem \cite{schmidt_simultaneous_1970}, confirming our claim. By \cite[Theorem VI p. 82]{cassels_1957}, we can find $\gamma = \gamma(\omega)>0$, such that, to $(\beta_1,\ldots, \beta_m)\in\R^n$ and $R>0$, there exists $q \in \mathbb{N}$ that solves
\begin{equation}\label{eq:SolutionOfQ}
    \sup_{i=1,\ldots, m}\Vert \omega_i \cdot q + \beta_i \Vert 
    < \gamma^{-1} R^{\frac{(m-1)(1+m\delta)^2-m}{m(1+m\delta)}}, \qquad \text{and}\qquad |q| < |R|^{1+m\delta}.
\end{equation}
Choose $\delta > 0$ small such that $(m-1)(1+m\delta)^2 < m$. Then for $\varepsilon > 0$ there exists $R = R(\delta, \varepsilon)$ such that
\begin{equation}\label{eq:DefinitionOfRWithEpsilon}
    \Big(\gamma^{-1} R^{\frac{(m-1)(1+m\delta)^2-m}{m(1+m\delta)}} \Big)^\alpha = \frac{\varepsilon}{m C_\F}. 
\end{equation}
Let $T_\varepsilon = R + R^{1+m\delta} + 1$. For $a\in \R$, we show that $[a,a+T_\varepsilon]$ contains an $\varepsilon$-period of $f$. Let $k = [a+R+1] \in \mathbb{Z}$ be the integer part of $a+R+1$, and $\beta_i = \omega_i \cdot k$ for $i=1,\ldots, m$, then, we can obtain $q\in \mathbb{N}$ be a solution to \eqref{eq:SolutionOfQ} for the given $(\beta_1,\ldots, \beta_m)$. Finally, we define $\ell = k + q \in \mathbb{Z}$, then $\ell \in [a, a+T_\varepsilon]$. We have
\begin{align*}
    |f(x+\ell) - f(x)| 
    &= C_\F \sum_{i=1}^m \Vert \omega_i \cdot q + \beta_i \Vert ^\alpha
    \leq C_\F \sum_{i=1}^m  \Big(\gamma^{-1} R^{\frac{(m-1)(1+m\delta)^2-m}{m(1+m\delta)}} \Big)^\alpha  = \varepsilon
 \end{align*}
thanks to \eqref{eq:DefinitionOfRWithEpsilon}. Hence, $\ell\in [a,a+T_\varepsilon]$ is an $\varepsilon$-period for $f$. We conclude that
\begin{equation*}
    l_\varepsilon(f) \leq T_\varepsilon \leq C \varepsilon
    ^{-\frac{m}{\alpha} \cdot \frac{1+m\delta}{m - (m-1)(1+m\delta)^2}} = C\varepsilon^{-\frac{m}{\alpha}(1+\mathcal{O}(\delta))}
\end{equation*}
where $C = C(m, \omega, \delta, C_\F, \alpha)$. For $0<\nu\ll 1$, we can find $\delta = \delta(m, \alpha, \nu)$ and a constant $C = C(m, \omega, C_\F, \alpha, \nu)$ such that $l_\varepsilon(f) \leq C \varepsilon^{-\frac{m}{\alpha} - \nu}$, thus the proof is complete.
\end{proof}

\begin{lemma}[$n$-frequency {\cite[Theorem 2.1]{Bryan1998}}]\label{coro:l-eps-nD} Let $n\geq 2$. We define
\begin{equation}\label{eq:defEnSetFullMeasure}
    \mathcal{E}_{n-1}:= 
    \left\lbrace
    \begin{aligned}
    & \omega \in \R^{n-1}: 
    \;\text{there is only finitely many}\; (q,R) \in \mathbb{Z} \times \R^+\;\text{such that} \\
    & \qquad \Vert \omega_i q\Vert:= \inf_{\kappa \in \mathbb{Z}} |\omega_i q - \kappa| < R^{-\frac{1}{n-1}} \;\text{for}\;i=1,\ldots, n-1 \;\text{and}\;
        |q| <  R|\log(R)|^{-2}.
    \end{aligned}
    \right\rbrace,
\end{equation}
and 
\begin{equation}\label{eq:defOmeganFullMeasure}
    \Omega_n = \Big\{t(\omega_1,\ldots, \omega_{n-1}, 1): t\in \mathbb{R}\backslash\{0\}\; \text{and}\;(\omega_1,\ldots, \omega_{n-1})\in \mathcal{E}_{n-1} \Big\}.
\end{equation}
Then $\Omega_n\subset\R^n$ is of full measure. Let $f(x) = \F(\xi x)$ for $x\in\R$, where $\xi\in \Omega_{n}$ and $\F\in \C^{0,\alpha}(\T^n)$. There exists $C=C\big(n, \xi, \alpha, \Vert \F\Vert _{\mathrm{C}^{0,\alpha}(\T^n)}\big)$ such that $l_\varepsilon(f) \leq C|\log(\varepsilon)|^{3(n-1)} \varepsilon^{-\frac{n-1}{\alpha}}$. 
\end{lemma}


\begin{lemma}[2-frequency {\cite[Theorem 3]{naito_1996}}]\label{prop:l-eps-2D} Let $\F\in \mathrm{C}^{0,\alpha}(\T^2)$, $\xi \in \R^2$ be non-resonant, and $f(x) = \F(\xi x)$ for $x\in \R$. If $\sigma_{\xi} = 1$, then $l_\varepsilon(f) \leq C\varepsilon^{-1/\alpha}$ where $C$ depends on $\xi, \alpha$ and $\Vert \F\Vert _{\mathrm{C}^{0,\alpha}}$. 
\end{lemma}

\begin{proposition}[General $n$-frequency quasi-periodic potential]\label{prop:n-frequency-rate} Let $\F\in \C^{0,\alpha}(\T^n)$ with $n\geq 2$. 
\begin{itemize}
    \item[$\mathrm{(i)}$] For almost every $\xi \in \R^n$, i.e., $\xi \in \Omega_n$ defined in \eqref{eq:defOmeganFullMeasure}, there exists $C=C(n, \xi, \Vert \F\Vert_{\mathrm{C}^{0,\alpha}})$ such that 
    \begin{equation*}
        \left|\frac{1}{T}\int_0^T \F(\xi x)\;dx - \int_{\T^n} \F(\x)\;d\x\right| \leq C |\log(T)|^{3(n-1)} \left(\frac{1}{T}\right)^{\frac{\alpha}{\alpha+n-1}}. 
    \end{equation*}
    
    \item[$\mathrm{(ii)}$] Let $\xi=(\xi_1,\ldots, \xi_n)\in \R^n$ be non-resonant with $\xi_i$ is algebraic for $i=1,\ldots,n$. For every $0<\nu \ll 1$ there exists
    $C=C(n, \xi, \Vert \F\Vert_{\C^{0,\alpha}}, \nu)$ such that 
    \begin{equation*}
        \left|\frac{1}{T}\int_0^T \F(\xi x)\;dx - \int_{\T^n} \F(\x)\;d\x\right| \leq C_\nu \left(\frac{1}{T}\right)^{\frac{\alpha}{\alpha+n-1 + \nu}}.
    \end{equation*}
    
    \item[$\mathrm{(iii)}$] If $n=2$ and $\xi = (\xi_1,\xi_2)$ and Diophantine index $\sigma_\xi = 1$, then there exists $C = C(\xi, \Vert \F\Vert_{\C^{0,\alpha}})$ such that
    \begin{equation*}
        \left|\frac{1}{T}\int_0^T \F(\xi x)\;dx - \int_{\T^n} \F(\x)\;d\x\right| \leq 
        C\left(\frac{1}{T}\right)^{\frac{\alpha}{\alpha+1}}.
    \end{equation*}
\end{itemize}

\end{proposition}
\begin{proof} For (i), by Lemma \ref{coro:l-eps-nD}, we have $l_\varepsilon(f) \leq C |\log(\varepsilon)|^{3(n-1)} \varepsilon^{-\frac{n-1}{\alpha}}$. By Lemma \ref{prop:ap:uap-ergodic-estimate-eps-l-eps} we have
\begin{equation*}
    \left|\frac{1}{T}\int_0^T \F(\xi x)\;dx - \int_{\mathbb{T}^n} \F(\x)\;d\x  \right| \leq \varepsilon + \Vert \F\Vert_{L^\infty(\mathbb{T}^2)}\frac{C(\xi)|\log(\varepsilon)|^{3(n-1)}}{\varepsilon^{(n-1)/\alpha} T}.
\end{equation*}
Choose $\varepsilon = T^\theta$ where $\theta = \frac{\alpha}{\alpha+n-1}$ we obtain the conclusion. The proofs for (ii) and (iii) follow similarly, by adapting Lemmas \ref{lem:P4RateSec2} and \ref{prop:l-eps-2D}, respectively. 
\end{proof}

\section{The effective Hamiltonian}\label{Sec:effectiveHamiltonian}

\subsection{A representation for the effective Hamiltonian}

\begin{lemma}\label{lem:basic-properties-H-bar} Assume \ref{itm:A0}. 
\begin{itemize}
\item[$\mathrm{(i)}$] $\overline{H}$ is convex, Lipschitz, coercive and even. Moreover, $\overline{H}\geq 0$ in $\R$ and $\overline{H}(0) = 0$. Let us define
\begin{equation}\label{eq:defn-p0}
    p_0 = \int_{\mathbb{T}^n} \left(2\mathbf{U}(\x)\right)^{1/2}\;d\x = \mathcal{M}\left(\sqrt{2\mathbf{U}(\xi x)}\right).
\end{equation}   
Then $\overline{H}(p) = 0$ for $|p|\leq |p_0|$, and for $p \geq |p_0|$, the value $\overline{H}(p)$ can be established by
    \begin{equation}\label{eq:effectiveH:representation}
        p = \int_{\mathbb{T}^n} \left( 2(\overline{H}(p) + \mathbf{U}(\x)) \right)^{1/2}\;d\x.
    \end{equation}
\item[$\mathrm{(ii)}$] If $|p|\geq |p_0|$ then there exists a unique (up to adding a constant) exact sublinear corrector, i.e., a classical solution $v_p\in \mathrm{C}^1(\mathbb{R})$ to the cell problem \eqref{eq:CPdelta} with $\delta = 0$: 
    \begin{equation}\label{eq:corrector}
        \frac{1}{2}|p+v'_p(x)|^2 + V(x) = \overline{H}(p) \qquad\text{in}\;\mathbb{R}. 
        \tag{CP$_0$}
    \end{equation}
\item[$\mathrm{(iii)}$]  $\overline{H}$ is strictly increasing (resp. decreasing) on $[p_0,\infty)$ (resp. $(-\infty, -p_0] $).
\end{itemize}
\end{lemma}

\begin{proof} The proof of (i) is standard and resembles the periodic case, thus we omit it and refer to \cite[Section 5.5.2]{tran_hamilton-jacobi_2021}. For (ii), we only need to consider $p\geq p_0$ since $\overline{H}$ is even. We construct an exact corrector $v_p\in \mathrm{C}^1(\mathbb{R})$ to \eqref{eq:corrector}.  Let $\mu = \overline{H}(p) \geq 0$ for $p\geq p_0$, let 
    \begin{equation*}
        w_p(t) = \int_0^t \sqrt{2(\mu + \mathbf{U}(\xi \x))}\;dx, \qquad t\in \mathbb{R}. 
    \end{equation*}
    Since $x\mapsto (\mu+ \mathbf{U}(\xi \x))^{1/2}$ is quasi-periodic, by Lemma \ref{prop:ap:QP-implies-AP-and-sup-inf} we have
    \begin{equation}\label{eq:effectiveH:representation-mean}
        p = \lim_{t\to \infty} \frac{w_p(t)}{t} = \lim_{t\to \infty} \frac{1}{t}\int_0^t  \sqrt{2(\mu + \mathbf{U}(\xi \x))}\;dx=\int_{\mathbb{T}^n} \sqrt{2(\mu + \mathbf{U}(x))}\;dx. 
    \end{equation}
   Since $w_p\in \mathrm{C}^1(\mathbb{R})$, the function 
\begin{equation}\label{eq:constructionvp}
	v_{p} (x) =   w_p(x) - p x
\end{equation}   
    belongs $\mathrm{C}^1(\mathbb{R})$ and is sublinear due to \eqref{eq:effectiveH:representation-mean}, and the \eqref{eq:corrector} follows since 
    \begin{equation*}
        \frac{1}{2}|p + Dv_{p}(x)|^2 + V(x) = \frac{1}{2}|Dw_\mu(x)|^2 + V(x) = \overline{H}(p)
    \end{equation*}
    in $\mathbb{R}$. Furthermore, $\mu\mapsto p$ as in \eqref{eq:effectiveH:representation-mean} is continuous and increasing from $[0,\infty) \to [p_0, \infty)$, thus it admits a unique continuous inverse map $p\mapsto \overline{H}(p):[p_0,\infty)\to [0,\infty)$ satisfying \eqref{eq:effectiveH:representation}. If $|p|\leq |p_0|$ then by convexity we have $\overline{H}(p) = 0$ since $\overline{H}(-p_0) = \overline{H}(-p_0) = 0$. 
\end{proof}

\begin{remark} Analogues of  \eqref{eq:defn-p0} and \eqref{eq:effectiveH:representation} are available for the almost periodic setting in the same manner.
\end{remark}

\begin{lemma}[Regularity of $\overline{H}$]\label{prop:D1H-bar-derivative} Assume \ref{itm:A0}. Then $\overline{H}\in \mathrm{C}^1(\mathbb{R}\backslash [-p_0, p_0])$, where $p_0$ is defined in \eqref{eq:defn-p0} with 
\begin{equation}\label{eq:representation-derivative-formula}
    \frac{1}{\overline{H}'(p)} = \int_{\mathbb{T}^2}  
    \frac{d\x}{\left(2(\overline{H}(p) + \mathbf{U}(\x))\right)^{1/2}}
    \;d\x, \qquad |p| > |p_0|.
\end{equation}
In particular, we have $\overline{H}\in \mathrm{C}^1(\mathbb{R})$ if and only if $\int_{\T^n} \mathbf{U}(x)^{-1/2}\;d\x = \infty$. 
\end{lemma}

\begin{proof} We only need to consider $p>p_0$. We denote $\phi: [0,\infty)\rightarrow[p_0,\infty)$ by
\begin{equation}\label{eq:defn-F-mu}
    \phi(\mu) := p = \int_{\mathbb{T}^n}(2(\mu+\mathbf{U}(\x)))^{1/2}\;d\x = \mathcal{M}(\F_\mu), \qquad\text{where}\; \F_\mu = \sqrt{2}(\mu + \mathbf{U})^{1/2},
\end{equation}
which is actually the inverse of $\overline{H}:[p_0,\infty)\rightarrow[0,\infty)$. Consequently, $\overline{H}'(p)\phi'(\mu) = 1$ when $\phi$ is differentiable. As we can see,  
\begin{equation}\label{eq:def-derivtive-Df-inverse-H-bar}
    \phi'(\mu) = \int_{\mathbb{T}^2} \left(2(\mu+\mathbf{U}(\x))\right)^{-1/2}\;d\x,
\end{equation}
so \eqref{eq:representation-derivative-formula} can be deduced. In fact, $\overline H$ is infinitely continuously differentiable on $(p_0,\infty)$. For $p>p_0$, we have
\begin{equation*}
    \frac{p - p_0}{\overline{H}(p) - \overline{H}(p_0)}=\frac{\phi(\mu) - \phi(0)}{\mu - 0} = 2\int_{\mathbb{T}^2} \frac{d\x}{\sqrt{2(\mu + \mathbf{U}(\x))} + \sqrt{2\mathbf{U}(\x)}}.
\end{equation*}
If $\mathbf{U}^{-1/2}\notin L^1(\mathbb{T}^2)$ then we can use monotone convergence theorem to obtain
\begin{equation*}
    \phi'(0^+) = \lim_{\mu\to 0^+} \frac{\phi(\mu) - \phi(0)}{\mu - 0} = \int_{\mathbb{T}^2}\frac{d\x}{\sqrt{2\mathbf{U}(\x)}} = +\infty. 
\end{equation*} 
Therefore $\overline{H}'_+(p_0) = 0$ and thus $\overline{H}$ is continuously differentiable at $p_0$.
\end{proof}

Next, we quantify the sublinear property of the corrector $v_p$ constructed in Lemma \ref{lem:basic-properties-H-bar}.

\begin{corollary}[Growth rate of sublinear corrector for $n$-frequency quasi-periodic potential]\label{prop:rate-corrector-sublinear-badly-approximable} Assume \ref{itm:A0} and $n\geq 2$. For $|p_0|\leq |p| \leq M$, let $v_p\in \mathrm{C}^1(\mathbb{R})$ be the exact corrector to \eqref{eq:corrector} with $v_p(0) = 0$. Then there exists $C = C(\mathbf{U}, \xi, M)$ such that
\begin{numcases}
    {\left|\frac{v_p(t)}{t}\right| 
	    \leq }
    C|t|^{-1}
        &\text{under \ref{itm:P1}}, \nonumber \\ 
    C |t|^{-\frac{\alpha}{\alpha+n-1}} |\log(t)|^{3(n-1)}
        &\text{under \ref{itm:P2}}, \nonumber\\ 
    C_\nu |t|^{-\frac{\alpha}{\alpha+n-1 +\nu}} 
        &\text{under \ref{itm:P4New3}}, \label{eq:correctorP4New3}\\ 
    C |t|^{-\frac{\alpha}{\alpha+1}} 
        &\text{under \ref{itm:P3}},\nonumber
\end{numcases}
In \eqref{eq:correctorP4New3}, we mean that for every $0<\nu \ll 1$ there exists a constant $C_\nu$ such that the estimate holds. 
\end{corollary}

\begin{proof} For $|p|\geq |p_0|$, let $\mu = \overline{H}(p) \geq 0$, then $p = p_\mu = \phi(\mu) = \mathcal{M}(\F_\mu)$ as defined in \eqref{eq:effectiveH:representation-mean} and \eqref{eq:defn-F-mu}, where $\F_\mu(\x) = \sqrt{2}(\mu+\mathbf{U}(\x))^{1/2}$. From \eqref{eq:constructionvp} we have
\begin{align}\label{eq:GrowthRateCorrector}
    \left|\frac{v_p(t)}{t}\right| 
    &= \left|\frac{w_p(t)}{t} - p\right| 
    = \left|\frac{1}{t}\int_0^t \F_\mu(\xi x)\;dx - \mathcal{M}(\F_\mu)\right| .
\end{align}
Applying Propositions \ref{prop:n-frequency-rate} and \ref{prop:ap:rate-Mean-QR} we obtain the conclusion.
\end{proof}

\begin{remark} Under \ref{itm:P1}, the exact corrector $v_p$ to \eqref{eq:corrector} is bounded with $v'_p$ being almost periodic, Thus, by applying a theorem of Bohr (see \cite[Theorem 5.20]{Katznelson2004}), we conclude that $v_p$ is almost periodic. 
\end{remark}

\subsection{$\mathrm{C}^{1,\beta}$-regularity of $\overline{H}$ and large time average of characteristics} 

For a Hamiltonian \( H(x,p) : \mathbb{R}^2 \to \mathbb{R} \) that is convex and superlinear in its second argument, its corresponding Lagrangian \( L(x,v) \) is defined via the Legendre transform:  
\begin{equation}\label{eq:LegendreTransform}
    L(x,v) = \sup_{p \in \mathbb{R}} \big(p \cdot v - H(x,p)\big), \qquad (x,v) \in \mathbb{R} \times \mathbb{R}.
\end{equation}  
\color{black}
If $p\in \mathbb{R}$ and $v_p$ is an exact corrector to \eqref{eq:CPdelta}, a path $ \eta_p:[0,\infty)\to \mathbb{R}$ is called a \emph{characteristic}, or a \emph{calibrated curve} associated with $p$, if
\begin{align*}
    \int_{a}^b \Big(L(\eta_p(s), \dot{\eta}_p(s)) + \overline{H}(p)\Big)\;ds = 
    p\cdot \eta_p(b) - p\cdot\eta_p(a) 
        + 
    v_p(\eta_p(b)) - v_p(\eta_p(a))
\end{align*}
for $(a,b)\subset [0,\infty)$. It is know that, upon a subsequence (see \cite{EWeinanAubryMatherCPAM1999, EvansGomesARMAPart12001, fathi_book, gomes_CalVarPDE2002, tran_yu_WeakKAM2022}) we have
\begin{equation}\label{eq:rate:eta-average}
    \lim_{t_i\to \infty }\frac{\eta_p(t_i)}{t_i} = q \in \partial \overline{H}(p),
\end{equation}
where $\partial \overline{H}(p)$ is the subdifferential of $\overline{H}$ at $p$, and $q$ is referred to as a \emph{rotation vector} (in higher dimensions). In particular, if $\overline{H}$ is differentiable at $p$ then the convergence is in the full sequence
\begin{equation}\label{eq:rate:eta-average-full}
    \lim_{t\to \infty }\frac{\eta_p(t)}{t} = \overline{H}'(p).
\end{equation}
As in \cite[eq. (3)]{tran_hamilton-jacobi_2021}, quantifying the rate of convergence of \eqref{eq:rate:eta-average-full} is an open problem. If $\overline{H}$ is $\mathrm{C}^2$ at $p$, a rate $\mathcal{O}(T^{-1/2})$ for \eqref{eq:rate:eta-average-full} is obtained (\cite{tran_hamilton-jacobi_2021, mitake_tran_yu_2019_rate_of_convergence_ARMA, gomes_CalVarPDE2002}). However, under our prototype \ref{itm:A1} $\overline{H}\notin \C^2$ at $\pm p_0$. 
For our purpose of establishing the convergence rate of the homogenization, we need to establish a uniform rate for \eqref{eq:rate:eta-average-full} as $|p|\to |p_0|$, which can be addressed in the following Proposition. We first state a definition first.  For a real-valued function \(f: \R \to \R\) and \(x_0 \in \R\), we denote the one-sided derivatives of \(f\) from the right and left at \(x_0\), respectively, as:  
\begin{equation*}  
    f'_{\pm}(x_0) = \lim_{x \to x_0^{\pm}} \frac{f(x) - f(x_0)}{x - x_0}.  
\end{equation*}  
\color{black}

\begin{proposition}\label{prop:boostrap-rate} Assume \ref{itm:A0} and the following:
\begin{itemize}
    \item[$\mathrm{(i)}$] $\overline{H}$ is $\C^{1,\beta}(\R\backslash [-p_0, p_0])$ for some $\beta \in (0,1]$; 
    \item[$\mathrm{(ii)}$]
    \label{eq:condtion-growth-sublinear} for $|p_0|\leq |p|\leq M$, the exact corrector $v_p$ to \eqref{eq:corrector} satisfies $ \left|\frac{v_p(t)}{t}\right| 
        \leq 
        \frac{C_M}{|t|^\theta}$ for some $\theta \in (0,1]$.
\end{itemize}
Let $|p_0|\leq |p|\leq M$ and $\eta_p:[0,\infty) \to \mathbb{R}$ be a characteristic that corresponds to $p$, then
\begin{equation*}
\begin{aligned}
      &\left|\frac{\eta_p(t)}{t} - \overline{H}'(p)\right| 
      \leq C 
      \left(\frac{1}{|t|}\right)^{\frac{\theta \beta}{1+\beta}} 
      &&\qquad \text{if}\;|p|>|p_0|, \\ 
      &\left|\frac{\eta_{\pm p_0}(t)}{t} - \overline{H}_{\pm}'(\pm p_0)\right| 
      \leq C 
      \left(\frac{1}{|t|}\right)^{\frac{\theta \beta}{1+\beta}} 
      &&\qquad \text{if}\;|p|=|p_0|. 
\end{aligned}
\end{equation*}
\end{proposition}

\begin{proof}[Proof of Proposition \ref{prop:boostrap-rate}] Let $\tilde{p}, p \geq p_0$ and $\tilde{\mu} = \overline{H}(\tilde{p}), \mu = \overline{H}(p) \geq 0 = \overline{H}(p_0)$. 
We observe that, as a characteristic, we have
\begin{equation}\label{eq:bound-eta}
    \left|\frac{\eta(t)}{t}\right| \leq \max_{s\in (0,\infty)} |\dot{\eta}(s)| \leq C_0,
\end{equation}
where $C_0$ depends only on the Hamiltonian. In particular, we can choose $C_0=\sqrt{2}\left(\max\{\mu, \tilde{\mu}\} + \Vert \mathbf{U}\Vert_{L^\infty}\right)^{1/2}$ under assumption \eqref{eq:assumptionH}. Let $v_p$ and $v_{\tilde{p}}$ be the correctors to the cell problem for $p$ and $\tilde{p}$, respectively with $v_p(0) = v_{\tilde{p}}(0) = 0$. We have $\dot{\eta}(s) = D_pH(\eta(s), p+v'_p(\eta(s)))$, therefore, the equality in Fenchel-Young inequality holds:
\begin{align*}
    \int_0^{t} \Big( L(\eta(s), \dot{\eta}(s)) + \overline{H}(\tilde{p})\Big)\;ds &\geq \int_0^t\dot{\eta}(s)\big(\tilde{p} + Dv_{\tilde{p}}(\eta(s))\big)ds = \tilde{p} \eta(t) + v_{\tilde{p}}(\eta(t)),\\
    \int_0^{t} \Big( L(\eta(s), \dot{\eta}(s)) + \overline{H}(p)\Big)\;ds &= \int_0^t \dot{\eta}(s)\big(p + Dv_p(\eta(s))\big)ds = p \eta(t) + v_{p}(\eta(t)).
\end{align*}
Subtracting these equations with $\overline{H}'(p)(\tilde{p} - p)$ we obtain
\begin{equation}\label{eq:estimate-p-tilde-p}
    \overline{H}(\tilde{p}) - \overline{H}(p) - \overline{H}'(p)(\tilde{p} - p) \geq \left(\frac{\eta(t)}{t}-\overline{H}'(p)\right)(\tilde{p}-p) + \frac{v_{\tilde{p}}(\eta(t))}{t} - \frac{v_{p}(\eta(t))}{t}.
\end{equation}
By assumption \eqref{eq:condtion-growth-sublinear} and \eqref{eq:bound-eta}we have
\begin{equation*}
    \left|\frac{v_{p}(\eta(t))}{t}\right| 
    +
    \left|\frac{v_{\tilde{p}}(\eta(t))}{t}\right|  
    \leq 
    \frac{\eta(t)}{t} 
    \left(\frac{(C_p+C_{\tilde{p}})C_0}{\eta(t)^{\theta}}\right) 
    \leq \frac{C}{t^{\theta}}.
\end{equation*}
By assumption that $\overline{H}$ is $\C^{1,\gamma}$ and convexity, we have 
\begin{equation}\label{eq:bound3}
	0 \leq 
	    \overline{H}(\tilde{p}) - \overline{H}(p) 
	    - \overline{H}'(p)(\tilde{p} - p)  
    \leq 
	C|\tilde{p}-p|^{1+\beta}.
\end{equation}
We deduce from \eqref{eq:estimate-p-tilde-p} and \eqref{eq:bound3} that
\begin{equation}\label{eq:bound-master-1}
    \left(\frac{\eta(t)}{t} - \overline{H}'(p)\right)\cdot(\tilde{p}-p) \leq C|\tilde{p} - p|^{1+\beta} + \frac{C}{t^\theta}. 
\end{equation}
At this stage, it is crucial to assume $p,\tilde{p} > p_0$, allowing us to select $\tilde{p} = p + \omega(t)\;\mathrm{sign}\left(\frac{\eta(t)}{t} - \overline{H}'(p)\right)$, where $\omega(t) \to 0^+$ as $t\to +\infty$, to be chosen. Since $|\tilde{p} - p| = \omega(t)$, we obtain from \eqref{eq:bound-master-1} that
\begin{equation*}
    \left|\frac{\eta(t)}{t} - \overline{H}'(p)\right| 
        \leq 
    C\omega(t)^\beta
        + 
        \frac{C}{t^\theta \omega(t)}. 
\end{equation*}
The optimal choice of $\omega(t)$ is $\omega(t)^{1+\beta} = t^{-\theta}$, which means $\omega(t) = t^{-\frac{\theta}{1+\beta}}$ and we obtain the conclusion for $p > p_0$. Now as the constant $C$ in the bound is independent of $p$ as long as $|p_0|\leq |p|\leq M$, and $\overline{H}'(p)$ is continuous as $p\to p_0^+$, we can deduce the same result for $|p|\geq |p_0|$. 
\end{proof}

\section{Rate of convergence for homogenization}
\label{Sect:rate2D}

In this section, we first estimate the rate of convergence for the case $n=2$ with the prototype \ref{itm:A1} or \ref{itm:A2}, which leads to Theorem \ref{thm:rate-C1-H-bar-prototype}. After that, we consider the general case $n\geq 2$ and 
give the proof for Theorem \ref{thm:n-frequency-lower-bound}. The whole process can be divided into four subsections.

\subsection{Ergodic estimates and the growth rate of corrector}

This part is devoted to compute the values of certain norms of $\F_\mu$ defined in \eqref{eq:defn-F-mu}, and reveal its asymptotic behavior as $\mu$ approaches $0^+$. For brevity, we use the notion `$\lesssim$' in the following computations, which implies `$\leq$' by multiplying a $\mathcal{O}(1)$-constant independent of the arguments in the context. For simplicity, we write \(\mathbf{x} = (x_1, x_2) \in \mathbb{T}^2\) to denote a point in \(\mathbb{T}^2\).

\begin{lemma}\label{prop:propertiesV0} Assume \ref{itm:A1}. Let $\mathbf{U}_0(\xi_1,\xi_2) = 2-\sin(2\pi \xi_1) - \sin(2\pi \xi_2)$ for simplicity. We have 
\begin{equation}\label{eq:est-V0}
\begin{cases}
\begin{aligned}
    C_1|\x-\x_0|^2 \leq &\mathbf{U}_0(\x) \leq C_2|\x-\x_0|^2, \quad &|\nabla \mathbf{U}_0(\x)| \leq C|\x-\x_0|, \\
    &|D^2\mathbf{U}_0|\leq C, \quad &|D^3\mathbf{U}_0(\x)| \leq C|\x-\x_0|,
\end{aligned}
\end{cases}
\end{equation}
where $\x_0 = (1/4,1/4)$. For $\mu \geq 0$, let $ \F_\mu = (\mu + \mathbf{U})^{1/2}$ and $\K_\mu = \F_\mu^{-1} = (\mu+\F)^{-{1/2}}$.
\begin{itemize}
    \item[$(\mathrm{i})$] $\F_\mu$ Lipschitz if $\gamma\geq 1$. Furthermore, for $0<\mu \leq C$ and $s\in (0,1)$ we have
    \begin{equation}\label{eq:estimate-H2s-F-mu-no-inverse}
        \begin{tabular}{|c||c|c|c|c|c|} 
        \hline
            $\gamma$            
                & $\gamma < 1$ 
                &  $\gamma = 1$ 
                & $1<\gamma<2$ 
                & $\gamma =2$ 
                & $\gamma > 2$\\
        \hline
            $\Vert \F_\mu\Vert_{H^{2+s}(\T^2)}$ 
                & $C \mu^{\frac{1}{2}-\frac{1+s}{2\gamma}}$
                & $C |\log(\mu)|^{\frac{1-s}{2}}\mu^{-\frac{s}{2}}$
                & $C \mu^{\frac{s}{2}-\frac{s}{\gamma}}$
                & $C |\log(\mu)|^{\frac{s}{2}}$
                & $C$ \\
        \hline
        \end{tabular}
    \end{equation}
    \noindent

    \item[$(\mathrm{ii})$] The function $\F_\mu^{-1} = \K_\mu$ satisfies
    \begin{equation*}
    \Vert \K_\mu\Vert_{L^1(\T^2)}
    \lesssim 
    \begin{cases}
        C(1+|\log(\mu)|) &\qquad\text{if}\;\gamma = 2,\\    
        C\big(1+ \mu ^{\frac{1}{\gamma} - \frac{1}{2}}\big) &\qquad\text{if}\;\gamma \neq 2,.
    \end{cases}
    \end{equation*}

    \item[$(\mathrm{iii})$] For each $s\in (0,1)$ there exists $C=C(\gamma,s)$ such that
    \begin{align}\label{eq:estimate-H2s-INVERSE}
        \Vert \K_\mu\Vert_{H^{2+s}(\T^2)} \leq
           C\left(1+\mu^{-\frac{1}{2} - \frac{1+s}{2\gamma}}\right), \qquad \gamma \in (0,\infty).
    \end{align}
\end{itemize}
\end{lemma}

\begin{proof} Let $\x_0 = (1/4, 1/4)$, from \eqref{eq:est-V0} and direct computation we have
\begin{equation}\label{eq:DF}
    \nabla \F_\mu = \frac{1}{2}(\mu+\mathbf{U}_0^\gamma)^{-1/2} \gamma \mathbf{U}_0^{\gamma-1} \nabla \mathbf{U}_0 = \frac{\gamma}{2}
    \frac{\mathbf{U}_0^{\gamma-1} \nabla \mathbf{U}_0}{\left(\mu + \mathbf{U}_0^\gamma\right)^{1/2}} \leq \frac{C|\x-\x_0|^{2\gamma-1}}{|\x-\x_0|^\gamma} \leq C_2 |\x-\x_0|^{\gamma-1}.
\end{equation}
\paragraph{\underline{\textbf{Part (i)}}} 
From \eqref{eq:DF} we have $|\nabla \F_\mu|\leq C$ if $\gamma\geq 1$. To estimate the Sobolev norm $\Vert \F_\mu\Vert_{H^{2+s}(\T^2)}$ by computing $\Vert D^k\F\Vert_{L^2(\T^2)}$ for $k=0,1,2,3$. Thanks to \eqref{eq:est-V0}, we compute directly that 
\begin{align}
    &\Vert D^0\F_\mu\Vert_{L^2(\mathbb{T}^2)} 
    = \Vert \F_\mu\Vert_{L^2(\mathbb{T}^2)} \leq C(1+\mu)^{1/2} 
    \nonumber, \\ 
    &\Vert D\F_\mu\Vert_{L^2(\mathbb{T}^2)} 
    \lesssim  
    C\left(\int_{\T^2} |\x-\x_0|^{2(\gamma-1)}\;d\x\right)^{1/2} \leq C\left(1 + \int_0^1 r^{2(\gamma-1)+1}\;dr    \right) \leq C
    \nonumber, \\
     &\Vert D^2\F_\mu\Vert_{L^2(\mathbb{T}^2)}  \lesssim 
    \begin{cases}
            C\left(1 + |\log(\mu)|^{\frac{1}{2}}\right), 
                    &\qquad \text{if}\;\gamma = 1,\\
            C\left(1 + \mu^{\frac{1}{2}\left(1-\frac{1}{\gamma}\right)}\right),
                    &\qquad \text{if}\;\gamma \neq 1. 
    \end{cases} 
    \nonumber
    \\
    &\Vert D^3\F_\mu\Vert_{L^2(\mathbb{T}^2)}  \lesssim 
    \begin{cases}
            C\left(1 + |\log(\mu)|^{\frac{1}{2}}\right), 
                    &\qquad \text{if}\;\gamma = 2,\\
            C\left(1 + \mu^{\frac{1}{2}\left(1-\frac{2}{\gamma}\right)}\right),
                    &\qquad \text{if}\;\gamma \neq 2.
    \label{eq:D3Fmu}
    \end{cases}
\end{align}
Using $\Vert \F_\mu\Vert_{H^{2}(\T^2)} 
    \lesssim \Vert \F_\mu\Vert_{L^2(\mathbb{T}^2)}  + \Vert D\F_\mu\Vert_{L^2(\mathbb{T}^2)}+
    \Vert D^2\F_\mu\Vert_{L^2(\mathbb{T}^2)}$, we deduce that
\begin{equation}\label{eq:AsymptoticOfH2sNormFmu}
    \begin{tabular}{|c||c|c|c|c|} 
        \hline
            $\gamma$            
                & $\gamma < 1$ 
                &  $\gamma = 1$ 
                & $\gamma > 1$\\
        \hline
            $\Vert \F_\mu\Vert_{H^{2}(\T^2)}$ 
                & $C_\mu \mu^{\frac{1}{2}\left(1-\frac{1}{\gamma}\right)}$
                & $C_\mu |\log(\mu)|^\frac{1}{2}$
                & $C_\mu$ \\
        \hline
    \end{tabular}
\end{equation}
Using Lemma \ref{prop:Holder-Sobolev}, for $s\in (0,1)$ we obtain
\begin{align}\label{eq:interpolationA1}
	\Vert \F_\mu\Vert_{H^{2+s}(\T^2)}  
		&\leq 	\Vert \F_\mu\Vert_{H^2(\T^2)} ^{1-s} 	\Vert \F_\mu\Vert_{H^3(\T^2)} ^s \leq 	\Vert \F_\mu\Vert_{H^{2+s}(\T^2)} + \Vert \F_\mu\Vert_{H^{2+s}(\T^2)}^{1-s}\Vert D^3\F_\mu\Vert_{L^{2}(\T^2)}^s.
\end{align}
From \eqref{eq:D3Fmu}, \eqref{eq:AsymptoticOfH2sNormFmu} and \eqref{eq:interpolationA1}
we obtain the conclusion \eqref{eq:estimate-H2s-F-mu-no-inverse}. 
    \medskip

\paragraph{\underline{\textbf{Part (ii)}}} We compute using \eqref{eq:est-V0} that
\begin{align*}
    \int_{\mathbb{T}^2} \K_\mu(\x)\;d\x 
    &\lesssim C\left(1 + \int_{|\x-\x_0|<1} \frac{d\x}{(\mu + |\x-\x_0|^{2\gamma})^{1/2}}\right) \lesssim C\left(1 + \int_{0}^{1}\frac{r\;dr}{(\mu+r^{2\gamma})^{1/2}}\right) \\
    &\lesssim C\left(1+\mu^{\frac{1}{\gamma} - \frac{1}{2}}\int_0^{\mu^{-\frac{1}{2\gamma}}}\frac{s\;ds}{(1+s^{2\gamma})^{1/2}}\right) 
    \lesssim 
    \begin{cases}
        C(1+|\log(\mu)|) &\qquad\text{if}\;\gamma = 2,\\    
        C\left(1+ \mu ^{\frac{1}{\gamma} - \frac{1}{2}}\right) &\qquad\text{if}\;\gamma \neq 2. 
    \end{cases}
\end{align*}
The conclusion of $\Vert \K_\mu \Vert _{L^1(\T^2)}$ follows from there. \medskip

\paragraph{\underline{\textbf{Part (iii)}}}
We compute using \eqref{eq:est-V0} that 
\begin{align*}
    \Vert \K_\mu\Vert_{L^2(\T^2)} 
    &\lesssim C\left(1 + \int_{|\x-\x_0|<1} \frac{d\x}{(\mu + |\x-\x_0|^{2\gamma})}\right)^{1/2} \lesssim C\left(1 + \int_{0}^{1}\frac{r\;dr}{(\mu+r^{2\gamma})}\right)^{1/2} \\
    &\lesssim C\left(1+\mu^{\frac{1}{\gamma} - 1}\int_0^{\mu^{-\frac{1}{2\gamma}}}\frac{s\;ds}{(1+s^{2\gamma})}\right)^{1/2} \lesssim 
    \begin{cases}
        C\left(1+|\log(\mu)|^{\frac{1}{2}}\right)                & \gamma=1,\\
        C\left(1+\mu^{-\frac{1}{2}+\frac{1}{2\gamma}}\right)  & \gamma\neq 1.
    \end{cases}
\end{align*}
Computing $D\K_\mu, D^2\K_\mu$ and $D^3\K_\mu$ and using the same method as in the previous parts, we have
\begin{align*}
	\Vert D\K_\mu\Vert_{L^2(\T^2)} \lesssim C\left(1+\mu^{-\frac{1}{2}}\right), \quad 
	\Vert D^2\K_\mu\Vert_{L^2(\T^2)} \lesssim C\left(1+\mu^{-\frac{1}{2} - \frac{1}{2\gamma}}\right), \quad 
	 \Vert D^3\K_\mu\Vert_{L^2(\T^2)} \lesssim C\left(1+\mu^{-\frac{1}{2} - \frac{1}{\gamma}}\right).
\end{align*}
We obtain that $\Vert \K_\mu\Vert_{H^{2}(\T^2)} 
    \lesssim \Vert \K_\mu\Vert_{L^2(\mathbb{T}^2)}  + \Vert D\K_\mu\Vert_{L^2(\mathbb{T}^2)}+
    \Vert D^2\K_\mu\Vert_{L^2(\mathbb{T}^2)} \leq C\mu^{-\frac{1}{2}-\frac{1}{2\gamma}}$. Using Lemma \ref{prop:Holder-Sobolev} similarly to \eqref{eq:interpolationA1} we obtain \eqref{eq:estimate-H2s-INVERSE}.
\end{proof}

Similar conclusions to those in Lemma \ref{prop:propertiesV0} can be drawn under assumption \ref{itm:A2}, with necessary adaptations. In the following, we treat \ref{itm:A1} and \ref{itm:A2} equally and state conclusions for both.

\begin{corollary}\label{coro:rate-v0} Assume \ref{itm:A1} or \ref{itm:A2} and $\sigma_\xi = 1$. If $0\leq \mu\leq \mu_0$ then there exists $C=C(\gamma, \mu_0)$ such that 
\begin{numcases}
    {\left|
        \frac{1}{T}\int_0^T (\mu+\mathbf{U}(\xi x))^{1/2}\;d x  - \int_{\T^2} (\mu+\mathbf{U}(\x))^{1/2}\;d\x
    \right|
    \leq }
        CT^{-1} & $\gamma \in (2,\infty)$,
        \nonumber\\ 
        CT^{-\frac{1}{2}} & $\gamma \in [1,2]$,
        \label{eq:LowerRateA1GammaBet1And2}\\ 
        CT^{-\frac{\gamma}{\gamma+1}} & $\gamma \in (0,1)$.
        \label{eq:LowerRateA1GammaLess1}
\end{numcases}
Moreover, for $\gamma > 2$, we can relax the condition $\sigma_\xi = 1$ to
\begin{equation}\label{eq:LowerRateA1GammaGeneralizedXi2}
    \left|\frac{1}{T} \int_0^T (\mu + \mathbf{U}(\xi x))^{1/2}\;dx - \int_{\T^2} (\mu + \mathbf{U}(\x))^{1/2} d\x\right| \leq \frac{C}{T} \qquad\text{for all}\; \xi\in \R^2\;\text{with}\;\sigma_\xi < 2. 
\end{equation}
\end{corollary}
\begin{proof} The proof follows from the calculation of the $\Vert \mathbf{U}^{1/2}\Vert_{H^{2+s}(\T^2)}$ or the H\"older norm of $\mathbf{U}^{1/2}$, which we carry out in Lemma \ref{prop:propertiesV0}. If $\gamma > 2$ then by \eqref{eq:estimate-H2s-F-mu-no-inverse} in Lemma \ref{prop:propertiesV0}, we have $\mathbf{U}^{1/2} \in H^{2+s}(\T^2)$ for any $s \in [0,1]$. As $n=2$, we have $\sigma_\xi \geq n-1 = 1$ (see \cite{JurgenKAM1999}). Since $\sigma_\xi \in [1,2)$, we can choose $s_0 \in (\sigma_\xi-1, 1)$, then with this value of $s_0$ we have $\mathbf{U}^{1/2} \in H^{2+s_0}(\T^2)$, and $2+s_0 > \frac{n}{2}+\sigma_\xi$. 
We can apply Proposition \ref{prop:ap:rate-Mean-QR} to obtain \eqref{eq:LowerRateA1GammaGeneralizedXi2}. If $\gamma\geq 1$ then by Lemma \ref{prop:propertiesV0} (i) we have $\mathbf{U}^{1/2}\in \C^{0,1}(\T^2)$, while if $\gamma\leq 1$, we have $\mathbf{U}^{1/2}\in \C^{0,\gamma}(\T^2)$. By Proposition \ref{prop:n-frequency-rate} we obtain \eqref{eq:LowerRateA1GammaBet1And2} and \eqref{eq:LowerRateA1GammaLess1}, respectively.
\end{proof}

\begin{remark}\label{remark:non-optimal-rate} We observe a discrepancy in the convergence rate in Corollary \ref{coro:rate-v0} between $\gamma > 2$ and $\gamma\leq 2$. This gap arises from the differences in the methods used to estimate the convergence rate to the mean value in Propositions \ref{prop:ap:rate-Mean-QR} and \ref{prop:n-frequency-rate}. 
\end{remark}


\begin{corollary}\label{coro:growth-vp-underA1} Assume \ref{itm:A1} or \ref{itm:A2} and $\sigma_\xi = 1$. If $|p_0|\leq p \leq M$ and $v_p$ is a corrector to \eqref{eq:CPdelta}, then there exists a positive constant $C = C(M)$ such that
\begin{numcases}
{\left|\frac{v_p(t)}{t}\right| 
        		\leq 
        	\frac{C}{|t|^\theta} 	
        		\qquad
        	\text{where}
        		\qquad
        	\theta = }
            \textstyle
        	 \;\;  1 
            &\text{if}\;$\gamma \in (2,\infty)$, 
            \label{eq:correctorBig2}\\
            \textstyle
        \;\; \frac{1}{2}
            &\text{if}\;$\gamma \in [1,2]$,
            \nonumber\\
            \textstyle
        \frac{\gamma}{\gamma+1}
            &\text{if}\;$\gamma \in (0,1)$. 
            \label{eq:correctorLess2}
\end{numcases}
    Furthermore, when $\gamma>2$, \eqref{eq:correctorBig2} is true for all $\xi\in \R^2$ such that $\sigma_\xi < 2$. 
\end{corollary}
\begin{proof} Using \eqref{eq:GrowthRateCorrector} in Corollary \ref{prop:rate-corrector-sublinear-badly-approximable} together with Corollary \ref{coro:rate-v0} we obtain the conclusion.
\end{proof}

\subsection{Regularity of $\overline{H}$ and large time average of characteristics}

Assume \ref{itm:A1} or \ref{itm:A2}, we aim to establish certain $\C^{1,\beta}$ regularity of $\overline{H}$, then use it to derive a rate of convergence for the large time average of characteristics in \eqref{eq:rate:eta-average-full}. Recall that $\overline{H}\in \C^1(\R\backslash [-p_0, p_0])$ from Lemma \ref{prop:D1H-bar-derivative}. 
In what follows, we denote $\overline{H}'(p) = \overline{H}'_+(p_0)$ if $p = p_0$. 

\begin{lemma}\label{lem:BehaviorDerevativeEffectiveHbar} Assume \ref{itm:A1} or \ref{itm:A2}. 
\begin{itemize}
    \item[$\mathrm{(i)}$] $\overline{H}\in \mathrm{C}^1(\mathbb{R})$ if and only if $\gamma\geq 2$. Furthermore, if $p\geq p_0$ there exist some $C_1, C_2 > 0$ such that
    \begin{align}
        C_1\overline{H}(p)^{\frac{1}{2} - \frac{1}{\gamma}}
            &\leq 
        \overline{H}'(p)
            \leq 
        C_2\overline{H}(p)^{\frac{1}{2} - \frac{1}{\gamma}}  &&  \gamma \in (2,\infty),
        \label{eq:bound-2-sides-DH-bar-gamma-not-2} \\
        C_1|\log(\overline{H}(p))|^{-1} 
            &\leq 
        \overline{H}'(p)
            \leq 
        C_2|\log(\overline{H}(p))|^{-1}  &&  \gamma = 2, \label{eq:bound-2-sides-DH-bar-gamma-2}\\
        0< C_1
            &\leq 
        \overline{H}'(p)
            \leq 
        C_2  && \gamma \in (0,2).
        \label{eq:bound-2-sides-DH-bar-gamma-less-2}
    \end{align}
    \item[$\mathrm{(ii)}$]  $\overline{H}\notin\mathrm{C}^2(\mathbb{R})$ for any $\gamma>0$. Furthermore, if $p > p_0$ there exist some $C_1, C_2 > 0$ such that 
    \begin{align}
        C_1\overline{H}(p)^{-\frac{2}{\gamma}}
            &\leq 
        \overline{H}''(p)
            \leq 
        C_2\overline{H}(p)^{- \frac{2}{\gamma}}  &&  \gamma \in (2,\infty)
        \label{eq:bound-2-sides-D2H-bar-gamma-bigger-2-and-2/3} \\
        C_1\left(\overline{H}(p)\log(\overline{H}(p))|\right)^{-1}
            &\leq 
        \overline{H}''(p)
            \leq 
        C_2\left(\overline{H}(p)\log(\overline{H}(p))|\right)^{-1}  &&  \gamma =2 
        \label{eq:bound-2-sides-D2H-bar-gamma-=2} \\ 
        C_1\overline{H}(p)^{\frac{1}{\gamma}-\frac{3}{2}}
            &\leq 
        \overline{H}''(p)
            \leq 
        C_2\overline{H}(p)^{\frac{1}{\gamma}-\frac{3}{2}}  &&   \gamma \in (\textstyle\frac{2}{3},2),
        \label{eq:bound-2-sides-D2H-bar-gamma-less-2} \\ 
        C_1(1+ |\log(\overline{H}(p)|)
            &\leq 
        \overline{H}''(p)
            \leq 
        C_2(1+ |\log(\overline{H}(p)|)  &&  \gamma=\textstyle\frac{2}{3}
        \label{eq:bound-2-sides-D2H-bar-gamma-AT23} \\ 
        C_1
            &\leq 
        \overline{H}''(p)
            \leq 
        0< C_2  &&  \gamma \in (0,\textstyle\frac{2}{3}).
        \label{eq:bound-2-sides-D2H-bar-gamma-LESS23} 
    \end{align}
\end{itemize}

\end{lemma}

\begin{proof} For $\mu > 0$, let $\mu\mapsto \phi(\mu) = p$ be the inverse of $p\mapsto \overline{H}(p)$ with $\phi(\mu)$ and $\phi'(\mu)$ defined in \eqref{eq:defn-F-mu} and \eqref{eq:def-derivtive-Df-inverse-H-bar}, respectively. \medskip

\paragraph{\underline{\textbf{Part (i)}}} By direct computation, $\nabla \F_\mu$ is bounded if $\gamma\geq 1$, and there are $C_1,C_2>0$ such that
\begin{equation*}
\begin{aligned}
    C_1(1+|\log(\mu)|) \leq \; &\phi'(\mu) = \int_{\mathbb{T}^2} \frac{d\x}{(\mu+\mathbf{U}(\x)^{1/2}} \leq C_2(1+|\log(\mu)|) & &\qquad \gamma = 2, \\
    C_1(1+\mu^{\frac{1}{\gamma}-\frac{1}{2}}) \leq \; &\phi'(\mu) = \int_{\mathbb{T}^2} \frac{d\x}{(\mu+\mathbf{U}(\x)^{1/2}} \leq C_2(1+\mu^{\frac{1}{\gamma}-\frac{1}{2}}) & &\qquad \gamma \neq 2.
\end{aligned}
\end{equation*}
Since $\phi'(\mu) = \overline{H}'(p)^{-1}$, we obtain the conclusions. \medskip

\paragraph{\underline{\textbf{Part (ii)}}} To compute $\overline{H}''(p)$ for $|p|>p_0$, we use $\overline{H}(\phi(\mu)) = \mu$ to obtain
\begin{equation*}
    \overline{H}''(p) 
    = -\phi''(\mu) \Big(\overline{H}'(p)\Big)^3 
    = \Big(\overline{H}'(p)\Big)^3 \int_{\mathbb{T}^2}
    \frac{d\x}{\left(2(\mu + \mathbf{U}(\x))\right)^{3/2}},
\end{equation*}
where we compute $\phi''(\mu)$ from \eqref{eq:def-derivtive-Df-inverse-H-bar}. We have 
\begin{align}
    C_1 \left(1 + \mu^{\frac{1}{\gamma}-\frac{3}{2}}\right)
    \leq 
    & \int_{\mathbb{T}^2}
    \frac{d\x}{\left(2(\mu + \mathbf{U}(\x))\right)^{3/2}} 
    \leq 
    C_2\left(1 + \mu^{\frac{1}{\gamma} - \frac{3}{2}}\right) 
    &\qquad  \gamma \neq \frac{2}{3}, 
    \nonumber \\ 
    C_1 \left(1 + |\log(\mu)|\right)
    \leq 
    &\int_{\mathbb{T}^2}
    \frac{d\x}{\left(2(\mu + \mathbf{U}(\x))\right)^{3/2}} 
    \leq 
    C_2\left(1 + |\log(\mu)|\right) 
    &\qquad  \gamma = \frac{2}{3}.  
    \nonumber
\end{align}
Therefore we obtain the conclusions from the previous estimates \eqref{eq:bound-2-sides-DH-bar-gamma-not-2}, \eqref{eq:bound-2-sides-DH-bar-gamma-2}, and \eqref{eq:bound-2-sides-DH-bar-gamma-less-2}. 
\end{proof}

\begin{lemma}\label{lem:BehaviorHolderDerevativeEffectiveHbar} Assume \ref{itm:A1} or \ref{itm:A2}. For $\tilde{p}, p\geq p_0$ we have
    \begin{align*}
        |\overline{H}'(\tilde{p}) - \overline{H}'(p)| &\leq C|\tilde{p} - p|^{\frac{1}{2} - \frac{1}{\gamma}} 
             \qquad && \gamma \in (2,\infty)\\
        |\overline{H}'(\tilde{p}) - \overline{H}'(p)| &\leq C\left|\log\left(\overline{H}(\tilde{p})\right) - \log\left(\overline{H}(p)\right)\right| 
             \qquad &&\gamma = 2\\
        |\overline{H}'(\tilde{p}) - \overline{H}'(p)| &\leq C|\tilde{p} - p|^{\frac{1}{\gamma}-\frac{1}{2}} 
             \qquad && \gamma \in (\textstyle\frac{2}{3}, 2) \\
        |\overline{H}'(\tilde{p}) - \overline{H}'(p)| &\leq C 
        \left| 
            \overline{H}(\tilde{p})(1+|\log(\overline{H}(\tilde{p}))|) 
            - 
            \overline{H}(p)(1+|\log(\overline{H}(p))|) 
        \right| 
             \qquad && \gamma = \textstyle\frac{2}{3}\\ 
        |\overline{H}'(\tilde{p}) - \overline{H}'(p)| &\leq C|\tilde{p} - p| 
             \qquad && \gamma \in (0, \textstyle\frac{2}{3}). 
    \end{align*}
\end{lemma}
\begin{proof} Let $\tilde{p}, p \geq p_0$. For brevity, we denote $\tilde{\mu} = \overline{H}(\tilde{p})$ and $\mu = \overline{H}(p)$. We have 
\begin{align}\label{eq:est-for-H'-Holder}
    \overline{H}'(\tilde{p}) - \overline{H}'(p) 
        &\leq \int_{p}^{\tilde{p}} \overline{H}''(\xi)\;d\xi.
\end{align}
We use the change of variable $\xi = \phi(\nu)$ and $\nu = \overline{H}(\xi)$ from \eqref{eq:defn-F-mu}, recall that $\phi'(\nu) = \overline{H}'(\xi)^{-1}$. 
\begin{itemize}
    \item If $\gamma > 2$, we use \eqref{eq:bound-2-sides-DH-bar-gamma-not-2} and \eqref{eq:bound-2-sides-D2H-bar-gamma-bigger-2-and-2/3}, which correspond to $\phi'(\nu) \leq C\nu^{\frac{1}{\gamma} - \frac{1}{2}}$ and $\overline{H}''(\xi) \leq C\nu^{-\frac{2}{\gamma}}$ to obtain 
    \begin{align*}
        \int_{p}^{\tilde{p}} \overline{H}''(\xi)\;d\xi 
        &\leq C\int_p^{\tilde{p}}\overline{H}(\xi)^{-\frac{2}{\gamma}}\;d\xi = C\int_{\mu}^{\tilde{\mu}} \nu^{-\frac{2}{\gamma}} \phi'(\nu)\;d\nu \leq C\int_{\mu}^{\tilde{\mu}} \nu^{-\frac{1}{2} - \frac{1}{\gamma}}\;d\nu\\
        &\leq C\left(\tilde{\mu}^{\frac{1}{2}-\frac{1}{\gamma}} - \mu^{\frac{1}{2}-\frac{1}{\gamma}} \right) 
        \leq C|\tilde{\mu}-\mu|^{\frac{1}{2}-\frac{1}{\gamma}} = C|\overline{H}(\tilde{p}) - \overline{H}(p)|^{\frac{1}{2}-\frac{1}{\gamma}} \leq C|\tilde{p} - p|^{\frac{1}{2}-\frac{1}{\gamma}}.
    \end{align*}
    \item  If $\gamma=2$, we use \eqref{eq:bound-2-sides-DH-bar-gamma-2} and \eqref{eq:bound-2-sides-D2H-bar-gamma-=2}, which are $\phi'(\nu)\leq C|\log(\nu)|$ and $\overline{H}''(\xi)\leq C\nu^{-1}|\log(\nu)|^{-1}$ to obtain 
    \begin{align*}
        \int_{p}^{\tilde{p}} \overline{H}''(\xi)\;d\xi 
        &\leq 
        C\int_p^{\tilde{p}}\frac{d\xi}{\nu|\log(\nu)|} 
        = C\int_{\mu}^{\tilde{\mu}} 
        		\frac{\phi'(\nu)\;d\nu}{\nu|\log(\nu)|} 
        \leq C\int_{\mu}^{\tilde{\mu}} \frac{d\nu}{\nu} 
        = C\left(\log(\tilde{\mu}) - \log(\mu)\right).
    \end{align*}
    \item If $\frac{2}{3}< \gamma < 2$, we use \eqref{eq:bound-2-sides-DH-bar-gamma-less-2} and \eqref{eq:bound-2-sides-D2H-bar-gamma-less-2}, which are $C_1 \leq \phi'(\nu) \leq C_2$ and $\overline{H}''(\xi) \leq C\nu^{\frac{1}{\gamma} - \frac{3}{2}}$ to obtain 
    \begin{align*}
        \int_{p}^{\tilde{p}} \overline{H}''(\xi)\;d\xi 
        &\leq 
        C\int_{\mu}^{\tilde{\mu}} 
        \nu^{\frac{1}{\gamma} - \frac{3}{2}}\phi'(\nu)\;d\nu 
        \leq C \int_{\mu}^{\tilde{\mu}} \nu^{\frac{1}{\gamma} - \frac{3}{2}} \;d\nu  
        = C
        \left(
        	\tilde{\mu}^{\frac{1}{\gamma}-\frac{1}{2}} 
        		- 
        	\mu^{\frac{1}{\gamma}-\frac{1}{2}}
        \right) 
        \leq C|\tilde{p} - p|^{\frac{1}{\gamma}-\frac{1}{2}}. 
    \end{align*}
    \item If $\gamma=\frac{2}{3}$, we use \eqref{eq:bound-2-sides-DH-bar-gamma-less-2} and \eqref{eq:bound-2-sides-D2H-bar-gamma-AT23}, which are $C_1 \leq \phi'(\nu) \leq C_2$ and $\overline{H}''(\xi) \leq C|\log(\nu)|$ to obtain 
    \begin{align*}
        \int_{p}^{\tilde{p}} \overline{H}''(\xi)\;d\xi \leq C\int_{\mu}^{\tilde{\mu}} |\log(\nu)|\;d\nu = C\Big(\tilde{\mu}(1+|\log(\tilde{\mu})|) - \mu(1+|\log(\mu)|)\Big).
    \end{align*}
    \item If $0<\gamma<\frac{2}{3}$, we use \eqref{eq:bound-2-sides-DH-bar-gamma-less-2} and \eqref{eq:bound-2-sides-D2H-bar-gamma-LESS23}, which are $C_1 \leq \phi'(\nu) \leq C_2$ and $\overline{H}''(\xi) \leq C$ to obtain
    \begin{align*}
        \int_{p}^{\tilde{p}} \overline{H}''(\xi)\;d\xi \leq C\Big(\overline{H}(\tilde{p}) - \overline{H}(p)\Big) \leq C|\tilde{p} - p|.
    \end{align*}
\end{itemize}
We deduce the conclusion from \eqref{eq:est-for-H'-Holder}.
\end{proof}

\begin{corollary}\label{prop:gamma=2-log-est} Assume \ref{itm:A1} or \ref{itm:A2}. We have
\begin{equation}\label{eq:HolderHBar}
	\begin{cases}
		\overline{H}\in \C^{1,\frac{1}{2}-\frac{1}{\gamma}}(\R) 
			& \qquad \gamma \in (2,\infty), \\
		\overline{H}\in \C^{1,\frac{1}{\gamma}-\frac{1}{2}}(\R\backslash [-p_0, p_0]) 
			& \qquad \gamma \in \left(\frac{2}{3}, 2\right), \\
		\overline{H}\in \C^{1,1}(\R\backslash [-p_0, p_0]) 
			& \qquad \gamma \in \left(0,\frac{2}{3}\right). 
	\end{cases}
\end{equation}
\end{corollary}

\begin{proposition}\label{coro:average-A1} Assume \ref{itm:A1} or \ref{itm:A2} and $\sigma_\xi = 1$. For $|p|\geq |p_0|$, let  $v_p$ be an exact sublinear corrector to \eqref{eq:CPdelta} and $\eta:[0,\infty) \to \mathbb{R}$ be a characteristic with respect to $p$ with $\eta(0) = 0$, or equivalently, $\eta$ is a solution of 
\begin{equation*}
\begin{cases}
\begin{aligned}
    |\dot{\eta}(s)| 
    		&= \sqrt{2(\overline{H}(p) - V(\eta(s))}, \qquad s>0,\\
    \eta(0) 
    		&= 0.
\end{aligned}
\end{cases}
\end{equation*}
If $|p_0|\leq |p|\leq M$, then there exists a positive constant $C=C(M)$ such that
\begin{align}
    \left|\frac{\eta(t)}{t} - \overline{H}'(p)\right|
        &\leq \frac{C}{|t|^\tau}\qquad\text{where}\;\tau =  \frac{\gamma-2}{3\gamma-2} 
            		\in \left(0,\frac{1}{3}\right) &&\gamma \in (2,\infty) 
              \label{eq:polyrateforgammaneq2}\\
    \left|\frac{\eta(t)}{t} - \overline{H}'(p)\right| 
        &\leq 
        \frac{C}{|\log(t)|} && \gamma = 2
        \nonumber\\
    \left|\frac{\eta(t)}{t} - \overline{H}_+'(p)\right|
        &\leq \frac{C}{|t|^\tau}\qquad\text{where}\;\tau = \frac{2-\gamma}{2(2+\gamma)} 
            		\in \left(0,\frac{1}{6}\right] && \gamma \in [1,2). 
              \nonumber
\end{align}
\end{proposition}

\begin{proof} We can assume $p>p_0$ first, then as the constant $C_M$ is bounded as $p\to p_0^+$, we deduce the result for $p\geq p_0$, and the other case $p<-p_0$ is similar. Using Lemma \ref{prop:gamma=2-log-est} and Corollary \ref{coro:growth-vp-underA1}, we proceed as follows. \medskip

\paragraph{\underline{\textbf{Case 1}}} If $\gamma >2$, then we can apply Proposition \ref{prop:boostrap-rate} with $\alpha = \frac{1}{2} - \frac{1}{\gamma}$ and $\theta = 1$, thus 
\begin{equation*}
	 \left|\frac{\eta(t)}{t} - \overline{H}'(p)\right| 
	 \leq 
	 	C\left(\frac{1}{|t|}\right)^\tau 
	 \qquad\text{where}\;
	 \tau = \frac{\theta \alpha}{1+\alpha} 
	       = \frac{\gamma-2}{3\gamma-2} . 
\end{equation*}
\paragraph{\underline{\textbf{Case 2}}} If $\gamma \in [1,2)$ then we can apply Proposition \ref{prop:boostrap-rate} with $\alpha = \frac{1}{\gamma} - \frac{1}{2}$ and $\theta = \frac{1}{2}$, thus 
\begin{equation*}
	\left|\frac{\eta(t)}{t} - \overline{H}_+'(p)\right| 
	 \leq 
	 	C\left(\frac{1}{|t|}\right)^\tau
	 \qquad\text{where}\;
	 \tau = \frac{\theta \alpha}{1+\alpha} 
	       = \frac{(2-\gamma)}{2(2+\gamma)}. 
\end{equation*}

\paragraph{\underline{\textbf{Case 3}}} If $\gamma=2$, let $p, \tilde{p}>p_0$ and $\mu=\overline{H}(p), \tilde{\mu} = \overline{H}(\tilde{p}) > 0$, respectively. Similar to Proposition \ref{prop:boostrap-rate} we have
\begin{align}
	\left(\frac{\eta(t)}{t} - \overline{H}'(p)\right)
	\cdot
	(\tilde{p} - p) 
		&\leq 
	\Big|
            \overline{H}'(\tilde{p}) - \overline{H}'(p)
        \Big|\cdot
	|\tilde{p} - p|  + \frac{C}{|t|^{1/2}} .
        \label{eq:casegamma2log}  
\end{align}
Let 
\begin{equation*}
    \tilde{p} = p + \frac{1}{C_H}\left(\frac{1}{ |t|} \right)^{1/6} \mathrm{sign}\left(\frac{\eta(t)}{t} - \overline{H}'(p)\right).
\end{equation*}
We note that the choice of $p, \tilde{p}$ as in the previous equation is always possible if $p,\tilde{p} > p_0$. 
Since $\overline{H}$ is Lipschitz, we have $|\tilde{\mu} - \mu| \leq C_H|\tilde{p}-p| = |t|^{-1/6}$. There are two cases:
\begin{itemize}
    \item If $\mu > |t|^{-1/3}$, we use the case $\gamma=2$ in Lemma \ref{prop:gamma=2-log-est} to obtain 
    \begin{align*}
    \Big|
        \overline{H}'(\tilde{p}) - \overline{H}'(p) 
    \Big|
        \leq 
    \left|\log\left(1+\frac{\tilde{\mu} - \mu}{\mu}\right)\right|    \leq 
    \frac{\left|\tilde{\mu} - \mu\right|}{\mu} 
       \leq \frac{|t|^{-1/6}}{|t|^{-1/3}}  = \frac{1}{|t|^{1/6}}. 
    \end{align*}    
    We then obtain from \eqref{eq:casegamma2log} that
    \begin{equation*}
        \left|\frac{\eta(t)}{t} - \overline{H}'(p)\right| 
            \leq 
        \frac{1}{|t|^{1/6}} + \frac{C}{|t|^{1/2} \cdot |\tilde{p} - p|} = \frac{1}{|t|^{1/6}} + \frac{C}{|t|^{1/3}} \leq \frac{C}{|t|^{1/6}}. 
    \end{equation*}
    \item If $\mu \leq |t|^{-1/3}$, for $|t|$ large we have
    \begin{equation*}
        \tilde{\mu} \leq \mu + |t|^{-1/6} \leq |t|^{-1/3} + |t|^{-1/6} \leq 2|t|^{-1/6}  .
    \end{equation*}
    Since $x\mapsto |\log(x)|$ is decreasing and $|\log(2x)|\geq \frac{1}{2}|\log(x)|$ if $0<x<\frac{1}{4}$, we have
    \begin{equation*}
        |\log(\mu)| \geq C|\log(t)| \qquad\text{and} \qquad |\log(\tilde{\mu})| \geq C|\log(t)|. 
    \end{equation*}
    From \eqref{eq:casegamma2log} and $\overline{H}'(p) \leq C|\log(\mu)|^{-1}$ (Lemma \ref{prop:gamma=2-log-est}), we have 
    \begin{equation*}
    \begin{aligned}
        \left|\frac{\eta(t)}{t} - \overline{H}'(p)\right| 
            \leq 
        C
        \left(\frac{1}{|\log(\tilde{\mu})|} + \frac{1}{|\log(\mu)|}\right) +\frac{C}{|t|^{1/2}|\tilde{p}-p| } \leq  \frac{C}{|\log(t)|} + \frac{C}{|t|^{1/3}} \leq \frac{C}{|\log(t)|}.
    \end{aligned}
    \end{equation*}
   \end{itemize}
    Combining the two cases we obtain the conclusion for $\gamma=2$. 
\end{proof}

\subsection{Rate of convergence in homogenization}
\label{SubSection:rate2D}

In this section we derive the rate of convergence for $u^\varepsilon\to u$, the solutions to \eqref{eq:intro:Ceps} and \eqref{eq:intro:C}, respectively. By optimal control theory (see \cite{Bardi1997,le_dynamical_2017, tran_hamilton-jacobi_2021}), we can write the solution to \eqref{eq:intro:Ceps} as (the Lagrangian \( L \) is defined as given in \eqref{eq:LegendreTransform})
\begin{align}
     u^\varepsilon(x,t) 
     &= \inf 
     \left\lbrace 
     \int_0^t L\left(\frac{\gamma(s)}{\varepsilon},\dot{\gamma}(s)\right)\;ds + u_0(\gamma(0)) : \gamma(t) = x, \dot{\gamma}\in L^1([0,t])
     \right\rbrace \nonumber \\
     &= \inf 
    \left\lbrace 
    \varepsilon\int_0^{\varepsilon^{-1}} 
    L\left(\eta(s),-\dot{\eta}(s)\right)\;ds
     + 
     u_0(\varepsilon\eta(\varepsilon^{-1})):
    \varepsilon\eta(0) = x, 
    \dot{\eta}\in 
    L^1\left([0,\varepsilon^{-1}]\right)
    \right\rbrace. \label{eq:optimalcontrolformula}
\end{align}
Without loss of generality we can assume $(x,t) = (0,1)$. Accordingly, we denote
\begin{equation*}
    \mathcal{A} =\big\lbrace \eta\in \mathrm{AC}([0,\varepsilon^{-1}]), \eta(0) = 0\big\rbrace
\end{equation*}
where $\mathrm{AC}([a,b])$ denotes the set of absolutely continuous functions from $[a,b]$ to $\mathbb{R}$. For $\eta\in \mathcal{A}$ we define the action functional
\begin{equation}\label{eq:A^eps}
    A^\varepsilon[\eta] = \varepsilon\int_0^{\varepsilon^{-1}} L\left(\eta(s), -\dot{\eta}(s)\right)\;ds + u_0\big(\varepsilon \eta(\varepsilon^{-1})\big). 
\end{equation}
If $\eta$ is a minimizer to \eqref{eq:optimalcontrolformula}, then it satisfies the Euler--Lagrange equation 
\begin{equation*}
    \begin{cases}
    \begin{aligned}
    \ddot{\eta}(s) &= -V'(\eta(s)) & & \text{in}\; \big(0,\varepsilon^{-1}\big),\\
    \eta(0) &=0. & &
    \end{aligned}
    \end{cases}
\end{equation*}
Due to the conservation of Hamilton's flow, there exists $r\in [r_{\textrm{min}},+\infty)$ where $r_\textrm{min} = \min_{\mathbb{R}}V$ such that
\begin{equation}\label{eq:rate-conservation-energy}
    H\left(\eta(s),\dot{\eta}(s)\right) = \frac{|\dot{\eta}(s)|^2}{2} + V(\eta(s)) = r \qquad\text{for all}\; s\in (0,\varepsilon^{-1}).
\end{equation}
Therefore, we can focus on the curves satisfying
\begin{equation}\label{eq:ode}
    \begin{cases}
    \begin{aligned}
        |\dot{\eta}(s)| &= \sqrt{2(r-V(\eta(s)))}, & & s\in (0,\varepsilon^{-1}),\\
        \eta(0) &= 0. & &
    \end{aligned}
    \end{cases}
\end{equation}
for any fixed energy $r\in [r_{\text{min}},+\infty)$. Benefiting from this, we can define
\begin{equation*}
    \mathcal{A}_r = \left\lbrace \eta\in \mathcal{A}\;\text{solving}\;\eqref{eq:ode}\;\text{with}\; H(\eta(s),\dot{\eta}(s)) = r\;\text{in}\;(0,\varepsilon^{-1})\right\rbrace.
\end{equation*}
Then 
\begin{equation}\label{eq:u-eps-01}
    u^\varepsilon(0,1) = \inf_{\eta\in \mathcal{A}} A^\varepsilon[\eta] = \inf_{r} 
    \left\lbrace 
    \inf_{\eta \in \mathcal{A}_r} A^\varepsilon[\eta] 
    \right\rbrace.
\end{equation}
To each $r \geq 0$, there exists a unique $p_r \geq p_0$ such that $\overline{H}(p_r) = r$, where $p_0 = \mathcal{M}(\mathbf{U}^{1/2})$ is defined as in \eqref{eq:defn-p0}, and
\begin{equation}\label{eq:defnOfpr}
    p_r = \mathcal{M}\left((2(r-V))^{1/2})\right) = \int_{\mathbb{T}^2} \sqrt{2(r+\mathbf{U}(x))}\;d\x = \mathcal{M}(\F_r). 
\end{equation}
where $\F_r = \sqrt{2}(r+\mathbf{U})^{1/2}$. By Lemma \ref{lem:basic-properties-H-bar} we have $\overline{H}(p_r) = r$. In what follows, we use 
 $\eta_r$ (or $\eta_{p_r}$) to indicate the path in $\mathcal{A}_r$ that corresponds to the energy $r$ (or momentum $p_r$), as long as  no ambiguity is caused.

\begin{lemma}\label{lem:TrickLemma} Assume \ref{itm:A0} and $V(x)\neq 0$ for all $x\in \R$. If $r\geq 0$ and $\eta_r$ is a solution to \eqref{eq:ode} with $\eta_r \geq 0$, then 
\begin{equation}\label{eq:clear-action-formula}
    A^\varepsilon[\eta_r] = -r + \left(\varepsilon \int_0^{\eta_r(\varepsilon^{-1})}
	 \sqrt{2(r-V(x))}\;dx \right) 
	 + u_0
	\left (\varepsilon
		\eta_r(\varepsilon^{-1}))
	\right).
\end{equation}
\end{lemma}
\begin{proof} If $r\geq 0$, then the solutions of \eqref{eq:ode} can only be one of the following possibilities:
\begin{equation}\label{eq:ode-positive}
\begin{cases}
\begin{aligned}
    \dot{\eta}(s) &= +\sqrt{2(r-V(\eta(s)))},  \qquad s\in (0,\infty),\\
    \eta(0) &= 0, 
\end{aligned}
\end{cases}
\end{equation}
or 
\begin{equation}\label{eq:ode-negative}
\begin{cases}
\begin{aligned}
    \dot{\eta}(s) &= -\sqrt{2(r-V(\eta(s)))},   \qquad s\in (0,\infty),\\
    \eta(0) &= 0,
\end{aligned}
\end{cases}
\end{equation} 
Indeed, since $r-V(x) > 0$ for every $r\geq 0$ and $x\in \mathbb{R}$, solutions to \eqref{eq:ode-positive} and \eqref{eq:ode-negative} exist for all time $s\in (0,\infty)$. Let us focus on \eqref{eq:ode-positive} as the other case is similar. We observe that
\begin{align}\label{eq:elliptic-integral-trick}
    \varepsilon^{-1} = \int_0^{\varepsilon^{-1}}\frac{\dot{\eta}(s)}{\dot{\eta}(s)}\;ds
	  &= \int_0^{\eta(\varepsilon^{-1})} \frac{dx}{\sqrt{2(r-V(x))}} 
	  \qquad \Longrightarrow\qquad 
        \varepsilon\int_0^{\eta(\varepsilon^{-1})} \frac{dx}{\sqrt{2(r-V(x))}} = 1.
\end{align}
Using the conservation of energy \eqref{eq:rate-conservation-energy}, we have
\begin{align}
    A^\varepsilon[\eta] 
        &= \left(r - 2\varepsilon \int_0^{\eta(\varepsilon^{-1})} 	
		\frac{V(x)}{\sqrt{2(r-V(x))}}dx
		+ u_0
		(\varepsilon
		\eta(\varepsilon^{-1}))
		\right) \label{eq:formula-1}\\
	&= r + 
	\left(\varepsilon \int_0^{\eta(\varepsilon^{-1})}
	 \sqrt{2(r-V(x))}\;dx \right) 
	 - 2r\left(
	 \varepsilon
	 \int_0^{\eta(\varepsilon^{-1})}\frac{dx}{\sqrt{2(r-V(x))}}
	 \right) 
	 + u_0
	\left (\varepsilon
		\eta(\varepsilon^{-1}))
	\right) . \nonumber
\end{align}
Using \eqref{eq:elliptic-integral-trick} we obtain the conclusion \eqref{eq:clear-action-formula}.
\end{proof}

Without loss of generality, we can always assume that \eqref{eq:ode-positive} has a smaller action value in \eqref{eq:A^eps} than
\eqref{eq:ode-negative}, since the arguments for these two are similar. Moreover, we can show that the minimization \eqref{eq:u-eps-01} cannot happen when $|r|$ is too large:

\begin{lemma}\label{prop:rlarge} There exists $r_0 = r_0(x)>0$ depends only on $\mathrm{Lip}(u_0)$ and $\Vert \mathbf{U}\Vert_{L^\infty}$ such that 
\begin{equation*}
    \inf_{r\geq r_0} A^\varepsilon[\eta] \geq u^\varepsilon(0, 1).
\end{equation*}
\end{lemma}

\begin{proof} Let $r>0$ and $\eta \in \mathcal{A}_r$ be a minimizer with energy $r$. Let $\widehat{u}(y,s) = u_0(y) + \widehat{C}s$ for $(y,s)\in \R\times (0,\infty)$ and $\widehat{C}$ large enough, depending on $\mathrm{Lip}(u_0)$ such that $\widehat{u}$ is a supersolution to \eqref{eq:intro:Ceps}. We have
\begin{align*}
    u^\varepsilon(0,1) \leq \widehat{u}(0,1) = u_0(0) + \widehat{C}
        &\leq u_0(\varepsilon \eta(\varepsilon^{-1})) + C|\varepsilon\eta(\varepsilon^{-1})| + \widehat{C}.
\end{align*}
Here $C$ is the Lipschitz constant of $u_0$. As $\eta\in \mathcal{A}_r$ implies $|\dot{\eta}|(s) = \sqrt{2(r-V(\eta_r(s)))}$, we have 
\begin{equation*}
    |\varepsilon
    \eta(\varepsilon^{-1})| \leq \sqrt{2}(|r| + \max |V|)^{1/2},
\end{equation*}
which implies that
\begin{equation*}
    u^\varepsilon(0,1) \leq u_0(\varepsilon\eta(\varepsilon^{-1})) + C\sqrt{2}(|r|+\max |V|)^{1/2}  + \widehat{C}. 
\end{equation*}
Therefore, if we choose
\begin{equation*}
    r_0 = C\sqrt{2}(|r|+\max |V|)^{1/2}  + \widehat{C},
\end{equation*}
then from \eqref{eq:formula-1} with $-V\geq 0$ and $\eta\geq 0$ we have $A^\varepsilon[\eta] \geq r + u_0(\varepsilon \eta(\varepsilon^{-1})) \geq u^\varepsilon(0,1)$.
\end{proof}

\begin{proposition}\label{thm:lower-bound2D} Assume \ref{itm:A1} or \ref{itm:A2} and $\sigma_\xi = 1$. There exists $C>0$ independent of $\varepsilon$ such that
\begin{numcases}
    {u^\varepsilon(0,1) - u(0,1)  \geq}
    -C\varepsilon^{\frac{\gamma}{\gamma+1}}
        &\text{if}\; $\gamma \in (0,1)$, 
        \nonumber\\
    -C\varepsilon^{1/2}
        &\text{if}\;$\gamma \in [1,2]$,
        \nonumber\\
    -C\varepsilon 
        &\text{if}\;$\gamma > 2$
        \label{eq:2DSobolev}.
\end{numcases}
\end{proposition}

\begin{proof} Let $r\in [r_{\text{min}},\infty)$ and $\eta\in \mathcal{A}_r$. For $|p| \geq |p_0|$, let $v_p$ be an exact sublinear corrector to the cell problem $H(x,p+Dv_p(x)) = \overline{H}(p)$ in $\mathbb{R}$ with $v_p(0) = 0$. We have 
    \begin{equation*}
        H\left(\eta(s), p + Dv_p(\eta(s))\right) = \overline{H}(p) \qquad\text{for all}\;s\in (0,\infty).
    \end{equation*}
    Therefore,  for $|p|\geq |p_0|$ we have \color{black}
    \begin{align*}
         \varepsilon \int_0^{\varepsilon^{-1}} 
        \Big( L(\eta(s), -\dot{\eta}(s))  + \overline{H}(p) \Big)\;ds     \geq
        -\varepsilon\int_0^{\varepsilon^{-1}}
            \dot{\eta}(s)
            \Big(p + Dv_p(\eta(s))\Big) \;ds 
            = 
        p \left(-\varepsilon \eta(\varepsilon^{-1})\right)- \varepsilon v_p\left(\eta(\varepsilon^{-1})\right).
    \end{align*}
    Hence,  for $|p|\geq |p_0|$ we have \color{black}
    \begin{align*}
       A^\varepsilon[\eta] 
            &= \varepsilon\int_0^{\varepsilon^{-1}}  L(\eta(s), -\dot{\eta}(s)) \;ds + u_0\left(\varepsilon\eta(\varepsilon^{-1})\right) \\
            & \geq 
            \left[ p \left(-\varepsilon \eta(\varepsilon^{-1})\right) - \overline{H} (p)\right] + u_0\left(\varepsilon\eta(\varepsilon^{-1})\right)  + \varepsilon v_p\left(\eta(\varepsilon^{-1})\right)\\
            & \geq \left[ p \left(-\varepsilon \eta(\varepsilon^{-1})\right) - \overline{H} (p)\right] + u_0\left(\varepsilon\eta(\varepsilon^{-1})\right)  + \inf_{|p|\geq |p_0|}\varepsilon v_p\left(\eta(\varepsilon^{-1})\right).
    \end{align*}
    As a consequence, we have 
    \begin{equation}\label{eq:deduction1}
        A^\varepsilon[\eta] - \inf_{|p|\geq |p_0|} \varepsilon v_p\left(\eta(\varepsilon^{-1})\right) 
            \geq   
        \left[p \left(-\varepsilon \eta(\varepsilon^{-1})\right) - \overline{H} (p)\right] + u_0\left(\varepsilon\eta(\varepsilon^{-1})\right) \quad \text{for any}\; |p|\geq |p_0|. 
    \end{equation}
    
    We deduce that, by changing the sign of $p$, we obtain 
    \begin{equation*}
        A^\varepsilon[\eta] - \inf_{|p|\geq |p_0|} \varepsilon v_p\left(\eta(\varepsilon^{-1})\right) 
            \geq   
        \left[|p| \cdot \big|\varepsilon \eta(\varepsilon^{-1})\big| - \overline{H} (p)\right] + u_0\left(\varepsilon\eta(\varepsilon^{-1})\right)\quad \text{for}\;|p|\geq |p_0|. 
    \end{equation*}
    In particular, at $|p| = |p_0|$, thanks to $\overline{H}(p_0) = \overline{H}(-p_0) = 0$, we have 
    \begin{equation*}
    \begin{aligned}
        A^\varepsilon[\eta] - \inf_{|p|\geq |p_0|} \varepsilon v_p\left(\eta(\varepsilon^{-1})\right) 
            &\geq   
        \left[|p_0| \cdot \big|\varepsilon \eta(\varepsilon^{-1})\big| \right] + u_0\left(\varepsilon\eta(\varepsilon^{-1})\right) \\
            &\geq 
        \left[|p| \cdot \big|\varepsilon \eta(\varepsilon^{-1})\big| \right] + u_0\left(\varepsilon\eta(\varepsilon^{-1})\right) \qquad\text{for any}\; p \in [-p_0,p_0] \\
            &= 
        \left[|p| \cdot \big|\varepsilon \eta(\varepsilon^{-1})\big| - \overline{H}(p)\right] + u_0\left(\varepsilon\eta(\varepsilon^{-1})\right) \qquad\text{for any}\; p \in [-p_0,p_0]
    \end{aligned}
    \end{equation*}
    thanks to $\overline{H}(p) = 0$ for $p\in [-p_0,p_0]$ from Lemma \ref{lem:basic-properties-H-bar}. We deduce that 
    \begin{equation}\label{eq:deduction2}
        A^\varepsilon[\eta] - \inf_{|p|\geq |p_0|} \varepsilon v_p\left(\eta(\varepsilon^{-1})\right) 
            \geq 
        \Big[ p \cdot \left(-\varepsilon \eta(\varepsilon^{-1})\right) - \overline{H}(p) \Big] + u_0\left(\varepsilon\eta(\varepsilon^{-1})\right) \quad\text{for any}\; p \in [-p_0,p_0].
    \end{equation}
    From \eqref{eq:deduction1} and \eqref{eq:deduction2}, we deduce that
    \begin{align}
        A^\varepsilon[\eta] - \inf_{|p|\geq |p_0|} \varepsilon v_p\left(\eta(\varepsilon^{-1})\right) 
            &\geq 
        \sup_{p\in \R}\Big[ p \cdot \left(-\varepsilon \eta(\varepsilon^{-1})\right) - \overline{H}(p) \Big] + u_0\left(\varepsilon\eta(\varepsilon^{-1})\right) \nonumber\\
            &= \overline{L}
        \left(
            -\varepsilon \eta(\varepsilon^{-1})
        \right) + u_0\big(\varepsilon\eta(\varepsilon^{-1})\big) \nonumber \\
            &\geq  \inf_{y\in \mathbb{R}} \Big( \overline{L}\left(-y\right) + u_0(y) \Big) = u(0,1) \label{eq:HopfLax}
    \end{align}
    where 
    \eqref{eq:HopfLax} comes from Hopf-Lax formula. In other words, we have
     \begin{align}\label{eq:bound-Aeps-tou01}
        A^\varepsilon[\eta]  \geq u(0,1) + \inf_{|p|\geq |p_0|} \varepsilon v_p\left(\eta(\varepsilon^{-1})\right). 
    \end{align}
    We can reduce $p_0\leq |p|\leq C_0$ as the minimization cannot happen when $|p|$ large (Proposition \ref{prop:rlarge}). Then $|\overline{H}(p)| \leq C_0$. Using Corollary \ref{coro:growth-vp-underA1} we have for $|p_0|\leq |p|\leq C$ that
    \begin{equation}\label{eq:lower2Drate}
        |\varepsilon v_p(\eta(\varepsilon^{-1}))|
        =
        |\varepsilon \eta(\varepsilon^{-1})|\cdot \left|
            \frac{v_p(\eta(\varepsilon^{-1}))}{\eta(\varepsilon^{-1})}
        \right|
        \leq 
    \begin{cases}
        C\varepsilon^{\frac{\gamma}{\gamma+1}}
            &\text{if}\;\gamma \in (0,1),\\     
        C\varepsilon^{\frac{1}{2}} 
            &\text{if}\;\gamma \in [1,2],\\     
        C\varepsilon 
            &\text{if}\;\gamma > 2, 
    \end{cases}
    \end{equation}
    where we use $|\varepsilon\eta(\varepsilon^{-1})| \leq C$ for $C$ depends on $C_0, \Vert \mathbf{U}\Vert_{L^\infty(\mathbb{T}^2)}$. From \eqref{eq:u-eps-01}, \eqref{eq:bound-Aeps-tou01} and \eqref{eq:lower2Drate} we obtain the conclusion. 
\end{proof}

\begin{proposition}\label{prop:upperbound2Dingredients} Assume \ref{itm:A1} or \ref{itm:A2} and $\sigma_\xi = 1$. There exists $C>0$ independent of $\varepsilon$ such that 
\begin{numcases}
    {u^\varepsilon(0,1) - u(0,1) \leq}
    \textstyle
    C\varepsilon^{\frac{\gamma-2}{3\gamma-2}}  
        &\text{if}\; $\gamma > 2$
        \label{eq:upper2Dbound}\\
        \textstyle
    \frac{C}{|\log(\varepsilon)|}
        &\text{if}\; $\gamma = 2$. 
        \nonumber
\end{numcases}
\end{proposition}

\begin{proof}[Proof] 
As $\gamma\geq 2$, we have $\overline{H}$ is $\C^1(\R)$ and
$\overline{H}'(p) = 0$ for $|p|\leq p_0$. In what follows, we provide a solution for \ref{itm:A1} and explain how \ref{itm:A2} follows in the same manner in the last step.

\smallskip
\paragraph{\underline{\textbf{Step 1. Positive energies}}} If $\eta_r \in \mathcal{A}_r$, then for $r\geq 0$ $\eta_r$ can only solve either \eqref{eq:ode-positive} or \eqref{eq:ode-negative}. Let us focus on $\eta_r \geq 0$, as the other case is similar. For $r\geq 0$ we have $r = \overline{H}(p_r)$ where $p_r$ is defined in \eqref{eq:defnOfpr}. Additionally, by \eqref{eq:representation-derivative-formula}, we have:
\begin{equation*}
    \lim_{s\to \infty}\frac{\eta_r(s)}{s} =
    \begin{cases}
        \overline{H}'(p_r) = \mathcal{M}\left(1/\F_r\right)^{-1}
            &\qquad\text{if}\;r > 0,    \\
        \overline{H}'(p_0) = 0 
            &\qquad\text{if}\;r = 0,
    \end{cases}
\end{equation*}
where $\F_r(\x) = (2(r+\mathbf{U}(\x)))^{1/2}$ for $\x \in \T^n$ as in \eqref{eq:defn-F-mu} (see $\K_r$ as in Lemma \ref{prop:propertiesV0}).  By Proposition \ref{coro:average-A1} \color{black} with $t = \varepsilon^{-1}$ we have
\begin{equation}\label{eq:rate2DinUpperBound}
    \left|
        \varepsilon \eta_r(\varepsilon^{-1}) - \overline{H}'(p_r)
    \right| 
    \leq 
    \begin{cases}
        C\varepsilon ^{\frac{\gamma-2}{3\gamma-2}} 
            &\text{if}\;\gamma > 2, \\
        \frac{C}{|\log(\varepsilon)|}
            &\text{if}\;\gamma  = 2.
    \end{cases}
\end{equation}
Here $C$ also depends on $t$ in the general case of $(x,t)$ instead of $(0,1)$, and also on $r_0$, due to the restriction $|r|\leq r_0$ due to Proposition \ref{prop:rlarge}. In view of \eqref{eq:clear-action-formula}, we define for for $r\geq 0$ the following quantities:
\begin{align*}
    A^{+}_r: &= -r + \overline{H}'(p_r) p_r + u_0\left(\overline{H}'(p_r)\right) \\
    A^{-}_r: &= -r + \overline{H}'(-p_r)(-p_r) + u_0\left(\overline{H}'(-p_r)\right) 
\end{align*}
For $r\geq 0$ and $\eta_r\in \mathcal{A}_r$ with $\eta_r\geq 0$, we compare $A^\varepsilon[\eta_r]$ and $A^+_r$ using:
\begin{align*}
    A^\varepsilon[\eta_r] - A^{+}_r
        &= \varepsilon \eta_r(\varepsilon^{-1}) \left(\frac{1}{\eta_r(\varepsilon^{-1})}\int_0^{\eta_r(\varepsilon^{-1})} (2(r+\mathbf{U}(\xi \x)))^{1/2}\;dx  - p_r\right)  \\
        &\qquad \qquad + \left( \varepsilon \eta_r(\varepsilon^{-1})  - \overline{H}'(p_r)\right) p_r + u_0\left(\varepsilon \eta_r(\varepsilon^{-1})\right) - u_0\left(\overline{H}'(p_r)\right).
\end{align*}
Therefore
\begin{align*}
    \left|A^\varepsilon[\eta_r] - A^{+}_r\right| 
        &\leq |\varepsilon\eta_r(\varepsilon^{-1})|\cdot \left|\frac{1}{\eta_r(\varepsilon^{-1})}\int_0^{\eta_r(\varepsilon^{-1})}\F_r(\xi x)\;dx - \mathcal{M}(\F_r) \right|  \\
        & \qquad\qquad \qquad \qquad\qquad\qquad + \left|\varepsilon \eta_r(\varepsilon^{-1})  - \overline{H}'(p_r)\right| \cdot \left(|p_r| + C\right),
\end{align*}
where $C = \mathrm{Lip}(u_0)$. Using Corollary \ref{coro:growth-vp-underA1} for the first term and \eqref{eq:rate2DinUpperBound} for the second term, we have:
\begin{itemize}
    \item If $\gamma > 2$, then, as $0\leq r\leq r_0$ we have
    \begin{align}\label{eq:gamma-2-range-bigger-2}
        \left|A^\varepsilon[\eta_r] - A^{+}_r\right|  
        &\leq 
            \varepsilon\eta_r(\varepsilon^{-1}) \cdot \frac{C}{|\eta_r(\varepsilon^{-1})|} 
            + 
            C\varepsilon^{\frac{\gamma-2}{3\gamma-2}}
        \leq 
            C\varepsilon + C\varepsilon^{\frac{\gamma-2}{3\gamma-2}} \leq C\varepsilon^{\frac{\gamma-2}{3\gamma-2}}. 
    \end{align}
    
    \item If $\gamma = 2$, then similarly to the previous case, we have
    \begin{align}\label{eq:gamma-2-range-equal-2}
        \left|A^\varepsilon[\eta_r] - A^{+}_r\right|  
        &\leq \varepsilon\eta_r(\varepsilon^{-1}) \cdot \frac{C}{|\eta_r(\varepsilon^{-1})|^{1/2}}
        +
        \frac{C}{|\log(\varepsilon)|} \leq C\varepsilon^{1/2} + \frac{C}{|\log(\varepsilon)|} \leq \frac{C}{|\log(\varepsilon)|}.
    \end{align}
\end{itemize}
Similarly, if $r\geq 0$ and $\eta_r \in \mathcal{A}_r$ with  $\eta_r\leq 0$\color{black}, we compare $A^\varepsilon[\eta_r]$ with $A_r^-$ and we can obtain the same rate as in \eqref{eq:gamma-2-range-bigger-2} and \eqref{eq:gamma-2-range-equal-2}.  \smallskip

\paragraph{\underline{\textbf{Step 2. Negative energies}}}

If $r<0$, from the equations \eqref{eq:ode-positive} and \eqref{eq:ode-negative} we have $\eta_0^-(s) \leq \eta_r(s) \leq \eta_0^+(s)$ for $s\in (0,\varepsilon^{-1})$, where $\eta_0^\pm$ are solutions to \eqref{eq:ode-positive}, \eqref{eq:ode-negative} with $r=0$, respectively. In particular, we deduce that 
\begin{equation*}
    0 \leq 
    |\varepsilon \eta_r(\varepsilon^{-1})|
    \leq 
    \max 
    \left\lbrace
    |\varepsilon \eta^-_0(\varepsilon^{-1}) - \overline{H}'(-p_0)|,
    |\varepsilon \eta^+_0(\varepsilon^{-1}) - 
    \overline{H}'(+p_0)|
    \right\rbrace.
\end{equation*}
We recall that $\overline{H}'(\pm p_0) = 0$. From \eqref{eq:A^eps}, \eqref{eq:rate-conservation-energy} and $L(x,v)\geq 0$ for all $(x,v)$ due to \ref{itm:A0}, we have 
\begin{equation*}
    \inf_{r\leq 0} A^\varepsilon[\eta_r] 
    \geq 
    u_0(\varepsilon \eta_r(\varepsilon^{-1})) 
    \geq 
    u_0(0) - C|\varepsilon\eta_0^-(\varepsilon^{-1})|
\end{equation*}
and thanks to \eqref{eq:clear-action-formula} we also have
\begin{align*}
     \inf_{r\leq 0} A^\varepsilon[\eta_r] 
        &\leq A^\varepsilon[\eta_0^+] = \varepsilon \int_0^{\eta_0^+(\varepsilon^{-1})} \sqrt{-2V(x)}\;dx + u_0(\varepsilon\eta_0^+(\varepsilon^{-1})) 
        \leq u_0(0) + C|\varepsilon \eta^+_0(\varepsilon^{-1})| .
\end{align*}
We conclude that 
\begin{equation}\label{eq:gamma-2-range-A}
    \left|
    \inf_{r\leq 0} A^\varepsilon[\eta_r] - u_0(0)
    \right| 
    \leq C|\varepsilon\eta_0(\varepsilon^{-1})|
    \leq \begin{cases}
            C \varepsilon^{\frac{\gamma-2}{3\gamma-2}}    & \gamma > 2, \\
            \frac{C}{|\log(\varepsilon)|} & \gamma = 2. 
        \end{cases}
    . 
\end{equation}

\paragraph{\underline{\textbf{Step 3. Combining the results}}} From Steps 1 and 2, we have
\begin{equation*}
    \left|
    \min
    \left\lbrace 
        \inf_{r\geq 0}A^\varepsilon[\eta_r],  
        \inf_{r\leq 0}A^\varepsilon[\eta_r]
    \right\rbrace 
    - 
    \min 
    \left\lbrace  
        \inf_{r\geq 0} A^{\pm}_r,\inf_{r\leq 0} u_0(0)
    \right\rbrace 
    \right| \leq 
        \begin{cases}
            C \varepsilon^{\frac{\gamma-2}{3\gamma-2}}    & \gamma > 2, \\
            \frac{C}{|\log(\varepsilon)|} & \gamma = 2. 
        \end{cases}
\end{equation*} 
Since $u^\varepsilon(0,1) = \inf_r A^\varepsilon[\eta_r]$ and $u^\varepsilon\to u$ as $\varepsilon\to 0^+$, we obtain 
\begin{equation*}
    \left|
    u^\varepsilon(0,1)
    - 
   u(0,1)
    \right| \leq 
        \begin{cases}
            C \varepsilon^{\frac{\gamma-2}{3\gamma-2}}    & \gamma > 2, \\
            \frac{C}{|\log(\varepsilon)|} & \gamma = 2,
        \end{cases}
\end{equation*}
together with a representation $u(0,1) = \min 
    \left\lbrace  
        \inf_{r>0} A^{\pm}_r,\inf_{r\leq 0} A^{\pm}_r
    \right\rbrace$.
\smallskip

\paragraph{\underline{\textbf{Step 4. Adaptation to the second prototype \ref{itm:A2}}}} Using the same strategy as in Subsection \ref{SubSection:rate2D}, we observe that for positive energy, the proof carries through similarly. The only difference occurs at $r\leq 0$, particularly $r=0$. Let us consider a path $\eta_0$ with a general point $(x,t)\in \R\times (0,\infty)$, we have

\begin{equation*}
    \begin{cases}
    \begin{aligned}
        |\dot{\eta}_0(s)| &= \sqrt{-2V(\eta_0(s))}, \qquad s\in (0,\varepsilon^{-1}t),\\
         \eta_0(0) &= \varepsilon^{-1}x.
    \end{aligned}
    \end{cases}
\end{equation*}

Under \ref{itm:A2}, we see that $V(x) = 0$ if and only if $x=0$, instead of $V(x)<0$ for all $x\in \R$ under \ref{itm:A1}. We note that since $\max_{\R} V = 0$, we also have
\begin{equation*}
    V'(0) = 0
\end{equation*}
and thus $x\mapsto \sqrt{-2V(x)}$ is Lipschitz at $x=0$ (see \cite{tu_2018_rate_asymptotic} for example). Depending on the initial position $\eta_0(0)$, the path $\eta_0$ can behave in the following ways:
\begin{itemize}
    \item $\eta_0(0) \equiv  0$, then it is easy to see that either $\varepsilon\eta_0 \equiv 0$, or $\varepsilon\eta_0(\varepsilon^{-1})$ behaves like \eqref{eq:gamma-2-range-A}. 
    \item $\eta_0(0) \neq 0$, then without loss of generality we consider $\eta_0(0) > 0$. There are only to solutions that satisfies either
    \begin{equation*}
            \lim_{s\to \infty} \eta_0(s) = +\infty \qquad\text{or}\qquad \lim_{s\to \infty} \eta_0(s) = 0.
        \end{equation*}
        In the latter case, the time it takes for $\eta_0$ to reach $0$ is infinite, since $V'(0) = 0$. In both cases, we have $0<\eta_0(s) \leq \eta_0^+(s)$ for all $s>0$, where $\dot{\eta}_0^+(s) = \sqrt{-2V(\eta_0^+(s))}$.
        We note that this solution $\eta_0^+$ satisfies \eqref{eq:gamma-2-range-A}, thus a bound for $\eta_0$ is obtain. 
\end{itemize}
In all cases, we obtain a similar estimate for $r=0$ as in \eqref{eq:gamma-2-range-A}, thus the proof proceeds similarly.
\end{proof}

\begin{proof}[Proof of Theorem \ref{thm:rate-C1-H-bar-prototype}] The proof of Theorem \ref{thm:rate-C1-H-bar-prototype} follows from Propositions \ref{thm:lower-bound2D} and \ref{prop:upperbound2Dingredients}. 
\end{proof}

\begin{corollary}\label{coro:RelaxationOfSigmaXi2} Assume \ref{itm:A1} and any $\xi \in \R^2$ with $\sigma_\xi < 2$, we have  $-C\varepsilon \leq u^\varepsilon(0,1) - u(0,1) \leq C\varepsilon^{\frac{\gamma-2}{3\gamma-2}}$.
\end{corollary}
\begin{proof} Due to \eqref{eq:LowerRateA1GammaGeneralizedXi2} in Corollary \ref{coro:rate-v0}, we can relax the condition $\sigma_\xi=1$ into $\sigma_\xi < 2$ in \eqref{eq:correctorBig2}, \eqref{eq:polyrateforgammaneq2}, \eqref{eq:2DSobolev} and \eqref{eq:upper2Dbound}. Put them together in the proof of Theorem \ref{thm:rate-C1-H-bar-prototype} we deduce the conclusion.
\end{proof}


\subsection{Proof of Theorem \ref{thm:n-frequency-lower-bound}}
We sketch the proof of Theorem \ref{thm:n-frequency-lower-bound} here, as it is similar to that of Theorem \ref{thm:rate-C1-H-bar-prototype}.

\begin{proof}[Proof of Theorem \ref{thm:n-frequency-lower-bound}] For the lower bound, we proceed similarly to Proposition \ref{thm:lower-bound2D} to obtain that 
\begin{align}\label{eq:bound-Aeps-tou01-new}
    A^\varepsilon[\eta_r]  \geq u(0,1) + \inf_{|p|\geq |p_0|} \varepsilon v_p\big(\eta(\varepsilon^{-1})\big). 
\end{align}

\paragraph{\underline{\textbf{Case 1}}} If \ref{itm:P1} holds then by Corollary \ref{prop:rate-corrector-sublinear-badly-approximable} with $|p_0|\leq |p|\leq C$, there exists $C=C\left(u_0, \Vert \mathbf{U}^{1/2}\Vert_{H^s(\T^n)}\right)$ such that
    \begin{equation*}
        \big|\varepsilon v_p\big(\eta(\varepsilon^{-1})\big)\big| 
        = \big|\varepsilon \eta(\varepsilon^{-1})\big|
    \cdot 
    \left| 
        \frac{v_p(\eta(\varepsilon^{-1}))}{\eta(\varepsilon^{-1})} 
    \right|
    \leq  \big|\varepsilon \eta(\varepsilon^{-1})\big|
    \cdot 
        \frac{C}{|\eta(\varepsilon^{-1})|} \leq C\varepsilon.
    \end{equation*}
    \smallskip
    
\paragraph{\underline{\textbf{Case 2}}} If \ref{itm:P2} holds then similarly
    \begin{align}
        \big|\varepsilon v_p\big(\eta(\varepsilon^{-1})\big)\big|
    &\leq  
        \varepsilon |\eta(\varepsilon^{-1})| \cdot 
    \frac{C|\log(\eta(\varepsilon^{-1}))|^{3(n-1)}}{|\eta(\varepsilon^{-1} )|^\theta} 
    = C\varepsilon |\eta(\varepsilon^{-1})|^{1-\theta} |\log(\eta(\varepsilon^{-1}))|^{3(n-1)} \label{eq:similarP2A}
    \end{align}
    where $\theta = \frac{\alpha}{\alpha+n-1}$ from Corollary \ref{prop:rate-corrector-sublinear-badly-approximable}. For $\varepsilon \in (0,1)$ and $|p_0|\leq |p|\leq C$, using the equation for $\eta$ we have $|\varepsilon\eta(\varepsilon
    ^{-1})| \leq C_0 := \sqrt{2}(|r_0| + \max |V|)^{1/2}$, where $r_0$ is the constant from Proposition \ref{prop:rlarge}. There are two cases:
    \begin{itemize} 
        \item[$\circ$] If $|\eta(\varepsilon^{-1})|\leq 1$ then from  we have
        \begin{align*}
             \big|\varepsilon v_p\big(\eta(\varepsilon^{-1})\big)\big| \leq
             C\varepsilon |\eta(\varepsilon^{-1})|^{1-\theta} |\log(\eta(\varepsilon^{-1}))|^{3(n-1)} \leq C\varepsilon.
        \end{align*}
        \item[$\circ$] If $|\eta(\varepsilon^{-1})|> 1$ then as $|\eta(\varepsilon^{-1})|\leq C_0\varepsilon^{-1}$, we have
        \begin{align*}
            |\log(\eta(\varepsilon^{-1}))| \leq \log(C_0) + |\log(\varepsilon)| \leq C|\log(\varepsilon)|
        \end{align*}
        if $\varepsilon$ is small enough. Therefore from \eqref{eq:similarP2A} we have
        \begin{align*}
            \big|\varepsilon v_p\big(\eta(\varepsilon^{-1})\big)\big| 
            &\leq C\left(|\varepsilon \eta(\varepsilon^{-1})|^{1-\theta} \right)\left(\varepsilon^\theta |\log(\varepsilon)|^{3(n-1)}\right) \leq C \varepsilon^\theta |\log(\varepsilon)|^{3(n-1)}.
        \end{align*}
    \end{itemize}
    From the two cases we obtain 
    \begin{equation*}
        \big|\varepsilon v_p\big(\eta(\varepsilon^{-1})\big)\big| 
            \leq 
        C \varepsilon^{\frac{\alpha}{\alpha+n-1}}|\log(\varepsilon)|^{3(n-1)}.
    \end{equation*}
    Here the constant $C$ depends on $\mathbf{U}, \xi, r_0$ and $\alpha$. \smallskip

\paragraph{\underline{\textbf{Case 3}}} If \ref{itm:P4New3} 
 or \ref{itm:P3} holds, then the result follow in the same manner, thanks to Corollary \ref{prop:rate-corrector-sublinear-badly-approximable}.
\smallskip

From \eqref{eq:bound-Aeps-tou01-new} and the corresponding estimates on $\varepsilon v_p(\eta(\varepsilon^{-1}))$ outlined in the steps above, we obtain the conclusions \eqref{eq:casesP1}, \eqref{eq:casesP2}, \eqref{eq:casesP4New3}, and \eqref{eq:casesP3} of Theorem \ref{thm:n-frequency-lower-bound}. For the upper bound, we proceed similarly to Proposition \ref{prop:upperbound2Dingredients}, omitting the details for brevity.
\end{proof}


\section{Application to Quantitative Ergodic Estimates and Discussions}
\label{sec:rateND}

\subsection{Improvement on the convergence rate of critical characteristics}

If $\overline{H}$ is $\C^{1,\beta}$ (even only one-sided) at $|p| = |p_0|$, the rate of convergence for the critical characteristic $\eta_0$ 
can be further improved, as a generalization of Corollary \ref{coro:average-A1}. Recall that Corollary \ref{coro:average-A1} claims that, under \ref{itm:A1} or \ref{itm:A2} and $\sigma_\xi = 1$, then
\begin{equation}\label{eq:rateoo}
\left|\frac{\eta_0(t)}{t}\right| \leq 
    C\left(\frac{1}{|t|}\right)^{\frac{\gamma-2}{3\gamma-2}}, \qquad \gamma > 2, 
\end{equation}
which can be improved as follows.

\begin{corollary}\label{coro:improve-rate0} Assume \ref{itm:A1} or \ref{itm:A2} with $\gamma>2$. Let $\eta_0$ be a characteristic with respect to $p = p_0$, then 
\begin{align*}
    \left|\frac{\eta_0(t)}{t}\right| \leq \left(\frac{1}{|t|}\right)^{\tau} 
        \qquad\text{where}\; \tau = \frac{(\gamma-2)(3\gamma-2)}{(\gamma-2)(3\gamma-2)+4\gamma^2} .
\end{align*}
\end{corollary}

\begin{proof}From \eqref{eq:estimate-p-tilde-p} we have
\begin{align}\label{eq:improveA}
    \left|\frac{\eta_0(t)}{t}\right|\cdot |p - p_0|\leq \overline{H}(p) + \frac{C}{|t|}. 
\end{align}
From \eqref{eq:HolderHBar} in Lemma \ref{prop:gamma=2-log-est}, we have $\overline{H}\in \C^{1,\beta}$ from one-sided at $p_0$, with $\beta = \frac{1}{2} - \frac{1}{\gamma}$ if $\gamma>2$. We have $\overline{H}(p) \leq C|p-p_0|^{1+\beta}$. Using \eqref{eq:bound-2-sides-DH-bar-gamma-not-2} we have
\begin{align*}
    \overline{H}(p) 
        &\leq \overline{H}'(p)\cdot (p-p_0) \leq C\overline{H}(p)^\beta |p-p_0| \leq C|p-p_0|^{\beta(1+\beta)}\cdot |p-p_0|.
\end{align*}
Together with \eqref{eq:improveA} we deduce that 
\begin{align*}
    \left|\frac{\eta_0(t)}{t}\right| \leq  C|p-p_0|^{\beta(1+\beta)} + \frac{C}{|t|\cdot|p-p_0|}. 
\end{align*}
Let $p = p_0 + \omega(t)$ where $\omega(t)$ is to be chosen (note that this implies $p>p_0$), we obtain
\begin{equation*}
    \left|\frac{\eta_0(t)}{t}\right|  \leq C\omega(t)^{\beta(\beta+1)} + \frac{C}{|t|\omega(t)}. 
\end{equation*}
The best choice of $\omega(t)$ is $ \omega(t)^{\beta(\beta+1) + 1} = |t|^{-1}$, thus $\omega(t) = |t|^{-\left(\frac{1}{\beta(\beta+1)+1}\right)}$, and consequently
\begin{equation*}
    \left|\frac{\eta_0(t)}{t}\right| \leq C\left(\frac{1}{|t|}\right)^{\tau} \qquad\text{where}\; \tau =  \frac{\beta(\beta+1)}{\beta(\beta+1)+1}.  
\end{equation*}
By evaluating $\beta = \frac{1}{2}-\frac{1}{\gamma}$, we establish the assertion.
\end{proof}

\begin{remark} Notice that when $\gamma>2$, $\tau \in \left(0,\frac{3}{7}\right)$, which is an improvement over the exponent in \eqref{eq:rateoo}. Such an improvement is not yet available for  $\gamma < 2$ because the associated $\overline{H}$ is not differentiable at $p_0$. In general, we can proposed the following open question. 
\end{remark}
\begin{question} Assume \ref{itm:A1} or \ref{itm:A2}. If $\eta_0$ is the characteristic at $p = p_0$, what is the optimal rate of convergence of 
\begin{equation*}
    \left| \frac{\eta_0(t)}{t}- \overline{H}'_+(p_0)\right|? 
\end{equation*}
\end{question}

\subsection{Application to quantitative ergodic estimates}

As a direct generalization of Proposition \ref{prop:ap:rate-Mean-QR} and \ref{prop:n-frequency-rate}, we look into  the rate of convergence to the mean value for unbounded quasi-periodic functions. In fact, it is not clear whether such a convergence is possible, not to mention the rate. 
 
\begin{question} Assume $f\in \mathrm{AP}(\R)\;\text{such that}\;f(x)>0 \;\text{for all}\;x\in \R$ and $\inf_{x\in\R}f(x)=0$.
\begin{itemize}
    \item[$\mathrm{(i)}$] When does the mean value theorem hold, i.e., 
    \begin{equation}\label{eq:QuestionErgodic}
        \lim_{T\to \infty} \frac{1}{T}\int_0^T \frac{dx}{f(x)} = \mathcal{M}\left(\frac 1{f}\right)? 
    \end{equation}         
    \item[$\mathrm{(ii)}$] If the mean value exists, can we determine a convergence rate?
\end{itemize}
\end{question}
 
To the best of our knowledge, no results related to this question have ever been made in the literature. Now we propose partial answer 
to this question in the quasi-periodic setting (Proposition \ref{prop:ASingleCaseOfApplicationErgodic}), by using the characteristics and non-resonant frequencies. To keep the presentation simple, we illustrate the idea using prototype \ref{itm:A1} and $f(x) = \mathbf{U}(\xi x)^{1/2}$. As for \ref{itm:A2},  we can modify the integral into 
\begin{equation*}
    \lim_{T\to \infty} \frac{1}{T}\int_a^{a+T} \frac{dx}{f(x)}
\end{equation*}
for some $a>0$, to avoid  $f(0) =0$. The argument is similar to \ref{itm:A1}.

\begin{proof}[Proof of Proposition \ref{prop:ASingleCaseOfApplicationErgodic}] Let $\eta_0$ be the characteristic with respect to $p=p_0$ such that $\eta_0(0) = 0$. From \eqref{eq:elliptic-integral-trick} with $r=0$ we have
\begin{equation}\label{eq:trick}
    \frac{t}{\eta_0(t)} = \frac{1}{\eta_0(t)} \int_0^{\eta_0(t)} \frac{dx}{\mathbf{U}(\xi x)^{1/2}}
\end{equation}
If $\gamma \geq 2$ then $\mathcal{M}(f^{-1}) = +\infty$ (Lemma \ref{lem:BehaviorDerevativeEffectiveHbar}). From Corollary \ref{coro:improve-rate0} we have 
    \begin{equation}\label{eq:ErgodicGammaBiggerEqual2}
        \left|\frac{\eta_0(t)}{t} \right| \leq 
        \begin{cases}
        \begin{aligned}
            &C|t|^{-\tau}  && \text{if}\;\gamma > 2, \\
            &C|\log(t)|^{-1} && \text{if}\;\gamma = 2.
        \end{aligned}
        \end{cases}
        \qquad\Longrightarrow\qquad \left|\frac{t}{\eta_0(t)}\right| \geq 
        \begin{cases}
            \begin{aligned}
                 &C|t|^{\tau}  && \text{if}\;\gamma > 2, \\
                &C|\log(t)| && 
                 \text{if}\;\gamma = 2.
            \end{aligned}
        \end{cases}
    \end{equation}
    where $\tau$ is given by \eqref{eq:RateQuasiPeriodicA1-A}. Using \eqref{eq:ErgodicGammaBiggerEqual2} in \eqref{eq:trick} we obtain 
    \begin{align}
        &&\frac{1}{T}\int_0^T \frac{dx}{\mathbf{U}(\xi x)^{1/2}} &\geq CT^{\tau} 
        && \text{if}\;\gamma > 2, \; \text{where}\; \tau = \frac{(\gamma-2)(3\gamma-2)}{(\gamma-2)(3\gamma-2)+4\gamma^2} 
    \label{eq:RateQuasiPeriodicA1-A} \\
        &&\frac{1}{T}\int_0^T \frac{dx}{\mathbf{U}(\xi x)^{1/2}} &\geq C|\log(T)|
        && \text{if}\;\gamma = 2. \nonumber
    \end{align}
If $\gamma \in (0,2)$ then $\mathcal{M}(f^{-1})$ is finite, by Lemma \ref{lem:BehaviorDerevativeEffectiveHbar}. 
\begin{itemize}
    \item If $\gamma \in [1, 2)$ then from \eqref{eq:polyrateforgammaneq2} of Proposition \ref{coro:average-A1} we have 
    \begin{equation*}
        \left|\frac{\eta_0(t)}{t} - \overline{H}'_+(p_0)\right| \leq \frac{C}{|t|^\tau}
        \qquad\text{where}\qquad \tau= \frac{1}{2}\frac{(2-\gamma)}{(2+\gamma)}. 
    \end{equation*}
    Since $\overline{H}'_+(p_0) > 0$, we deduce that for $t$ large enough then
    \begin{equation}\label{eq:BoundednessInverse}
        \left|\frac{t}{\eta_0(t)}\right| \leq C. 
    \end{equation}
    We deduce that 
    \begin{equation}\label{eq:ErgodicGammaTrick}
         \left|\frac{t}{\eta_0(t)} - \left(\overline{H}'_+(p_0)\right)^{-1}\right| \leq C \left(\overline{H}'_+(p_0)\right)^{-1} \left|\frac{t}{\eta_0(t)}\right|\cdot \frac{1}{|t|^\tau} \leq \frac{C}{|t|^\tau}
    \end{equation}
    for $|t|$ large enough, due to \eqref{eq:BoundednessInverse}. 
    From Lemma \ref{prop:D1H-bar-derivative} we have 
    \begin{equation}\label{eq:DerivativeFormula}
        \left(\overline{H}'_+(p_0)\right)^{-1} = \int_{\T^2} \frac{d\x}{\mathbf{U}(\x)^{1/2}}. 
    \end{equation}
    Using \eqref{eq:DerivativeFormula} in \eqref{eq:ErgodicGammaTrick} and \eqref{eq:trick} we obtain the conclusion 
    \begin{equation*}
        \left|\frac{1}{T}\int_0^T \frac{dx}{\mathbf{U}(\xi x)^{1/2}} - \int_{\T^2} \frac{d\x}{\mathbf{U}(\x)^{1/2}}\right| \leq C\left(\frac{1}{T}\right)^{\tau} \quad \text{where}\;\tau = \frac{1}{2}\cdot\frac{2-\gamma}{2+\gamma}. 
    \end{equation*}

    \item If $\gamma \in (0,1)\backslash \{\frac{2}{3}\}$, then by using Lemma \ref{prop:gamma=2-log-est} part (iii) and \eqref{eq:correctorLess2} of Corollary \ref{coro:growth-vp-underA1} we obtain
    \begin{numcases}
        {
        \left|\frac{\eta_0(t)}{t} - \overline{H}'_+(p_0)\right| \leq \frac{C}{|t|^\tau}
        \qquad\text{where}\;\tau = 
        }
        \textstyle \frac{\gamma}{1+\gamma}\frac{(2-\gamma)}{(2+\gamma)} 
            &\text{if}\; $\gamma \in (\frac{2}{3}, 1)$ 
            \nonumber, \\ 
        \textstyle \frac{1}{2}\frac{\gamma}{1+\gamma}. 
            &\text{if}\; $\gamma \in (0, \frac{2}{3})$ 
            \nonumber.
    \end{numcases}
    Using \eqref{eq:ErgodicGammaTrick} and \eqref{eq:DerivativeFormula}, we obtain 
    \begin{numcases}
        {\left|\frac{1}{T}\int_0^T \frac{dx}{\mathbf{U}(\xi x)^{1/2}} - \int_{\T^2} \frac{d\x}{\mathbf{U}(\x)^{1/2}}\right| \leq C\left(\frac{1}{T}\right)^{\tau} \quad \text{where}\;\tau = }
        \textstyle \frac{\gamma}{1+\gamma}\frac{2-\gamma}{(2+\gamma)}, 
            &\;\text{if}\; $\gamma \in (\frac{2}{3},1)$ 
            \nonumber, \\ 
        \textstyle \frac{1}{2}\frac{\gamma}{1+\gamma}, 
            &\;\text{if}\; $\gamma \in (0, \frac{2}{3})$ .
            \nonumber
    \end{numcases}

    \item If $\gamma = \frac{2}{3}$ then $\frac{\gamma}{\gamma+1} = \frac{2}{5}$, and from Lemma \ref{prop:gamma=2-log-est} part (iii) and \eqref{eq:correctorLess2} of Corollary \ref{coro:growth-vp-underA1} we have 
    \begin{align*}
        \left|\frac{\eta_0(t)}{t} - \overline{H}'_+(p_0)\right|
            &\leq C
        \Big|
            \mu + \mu \log\left(\frac{1}{\mu}\right)
        \Big| 
            + 
        \frac{C}{|t|^{2/5}\cdot |p - p_0|} 
            \leq C 
        \mu |\log (\mu)| + \frac{C}{|t|^{2/5}\cdot |p - p_0|}
    \end{align*}
    where for simplicity we denote $\mu = \overline{H}(p)$ with $p>p_0$. Let $\delta > 0$ and $C_H$ be the Lipschitz constant of $\overline{H}$, we choose $p = p_0 + C_H^{-1}|t|^{-\delta}$ so that $\mu = \overline{H}(p) \leq C_H|p-p_0| \leq |t|^{-\delta}$. Since $\mu\mapsto \mu|\log(\mu)|$ is non-decreasing for $\mu \in (0,1)$, we have
    \begin{align*}
        \left|\frac{\eta_0(t)}{t} - \overline{H}'_+(p_0)\right| \leq C\delta |t|^{-\delta} \log(|t|) + \frac{C C_H}{|t|^{2/5 - \delta}} = \frac{C\delta |\log(t)|}{|t|^\delta} + \frac{C}{|t|^{2/5-\delta}} \leq \frac{C|\log(t)|}{|t|^{1/5}}
    \end{align*}
    by choosing $\delta = \frac{1}{5}$. Using \eqref{eq:ErgodicGammaTrick} and \eqref{eq:DerivativeFormula} again we obtain
    \begin{equation*}
        \left|\frac{1}{T}\int_0^T \frac{dx}{\mathbf{U}(\xi x)^{1/2}} - \int_{\T^2} \frac{d\x}{\mathbf{U}(\x)^{1/2}}\right| \leq C\frac{|\log(T)|}{T^{1/5}}. 
    \end{equation*}
\end{itemize}
Finally, when $\gamma>2$, by \eqref{eq:LowerRateA1GammaGeneralizedXi2} in Corollary \ref{coro:rate-v0} we can relax the condition $\sigma_\xi = 1$ into $\sigma_\xi < 2$, in the same way as in Corollary \ref{coro:RelaxationOfSigmaXi2}. The proof is complete.
\end{proof}

\begin{remark}\label{rmk:GeneralErgodic} For a general function $\mathbf{U}$ satisfying \ref{itm:A0} and $f(x) = \mathbf{U}(\xi x)$ such that $f(x)>0$ for all $x\in \R$, the following procedure can be concluded for a rate of convergence in \eqref{eq:QuestionErgodic}:
\begin{enumerate}
    \item Let $H(x,p) = \frac{1}{2}|p|^2 - f(x)$. We compute the effective Hamiltonian $\overline{H}$ and analyze the regularity of $\overline{H}$ at the critical points $|p| = |p_0| = \int_{\T^n} \sqrt{2\mathbf{U}(\x)}\;d\x$ (see Lemma \ref{lem:basic-properties-H-bar}). This involves computations on how some norms of $\mathbf{U}^{1/2}$ blows up near its zero minimums.

    \item Obtaining the growth rate of a sublinear corrector at  $|p| = |p_0|$. This step utilizes either Proposition \ref{prop:ap:rate-Mean-QR} or \ref{prop:n-frequency-rate},  assuming conditions on the non-resonant vector $\xi$ such as \ref{itm:P1}, \ref{itm:P2}, \ref{itm:P4New3}, or \ref{itm:P3}. 

    \item Obtaining a convergence rate of characteristics at $|p| = |p_0|$ using the regularity of $\overline{H}$ at these points.

    \item Combining everything, we utilize an equation similar to \eqref{eq:trick} to link the ergodic estimate with critical characteristics. Such an equation, like \eqref{eq:trick} is always available due to the simple structure of the Euler-Lagrange equation for characteristics in one dimension (see Lemma \ref{lem:TrickLemma}). 
\end{enumerate}
\end{remark}


\subsection{Outlook and Discussion} 
For \ref{itm:A1} or \ref{itm:A2} with $\gamma\in(0,2)$, the associated effective Hamiltonian $\overline H(p)$ is not differentiable at $\pm p_0$. As a consequence, the {\it sub-gradient set} 
\begin{equation*}
    D^+ \overline{H}(p_0) = [0, \overline{H}'_+(p_0)]
\end{equation*}
establishes a non-empty interval. This situation prevents us from obtaining an effective upper bound for $u^\varepsilon(0,1)-u(0,1)$ as described in the Proposition \ref{prop:upperbound2Dingredients}. 
This is because for any $v\in (0, \overline{H}'_+(p_0))$, the possible characteristic with an average velocity  $v$ must have negative energy. For those characteristics, the large time behavior could not be derived from the effective Hamiltonian. On the other side, non-smooth $\overline H$ is rather common, e.g. $\gamma=1$ for prototype \ref{itm:A1} or \ref{itm:A2}. So we have to study the dynamics of characteristics with negative energy.

\begin{question} If $\overline{H}\notin \C^1(\R)$, can we obtain a polynomial upper bound rate for $u^\varepsilon(0,1) - u(0,1)$?
\end{question}


\section*{Acknowledgment}
B. Hu acknowledges support from SIMONS Travel Support MPS-TSM-00007213. S. Tu acknowledges support from NSF grants DMS-1664424, DMS-1843320, and DMS-2204722. J. Zhang acknowledges support from the National Key R\&D Program of China (No. 2022YFA1007500) and the National Natural Science Foundation of China (No. 12231010).
Son Tu extends gratitude to Hung Tran for suggesting the problem and for insightful discussions, and to Olga Turanova and Russell Schwab for their encouragement. Son Tu and Jianlu Zhang thank Hung Tran, Hiroyoshi Mitake, and Wenjia Jing for their valuable comments on the paper. The authors also thank the anonymous reviewers for their feedback, which greatly improved the paper, especially their insightful suggestions for clarifying the section on related literature and highlighting the paper's contributions. 

\bibliography{refs.bib}{}
\bibliographystyle{alpha}
\end{document}